\newtheorem{theorem}{Theorem}[section]
\newtheorem{proposition}{Proposition}[section]
\newtheorem{corollary}{Corollary}[section]
\newtheorem{remark}[theorem]{Remark}
\newtheorem{assumption}[theorem]{Assumption}
\newtheorem{definition}[theorem]{Definition}
\newenvironment{proof}[1][Proof]{\begin{trivlist}
\item[\hskip \labelsep {\bfseries #1}]}{\end{trivlist}}
\newcommand{\dd}{\mathrm{d}}
\newcommand{\cvd}{\hfill$\square$}
\newcommand{\ignore}[1]{}
\newcommand{\notmid}{\mid\kern-0.5em\not\kern0.5em}
\newcommand{\ba}{\begin{array}}
\newcommand{\ea}{\end{array}}
\begin{document}
\title{Analysis of the Magneto-acoustic Tomography with Magnetic Induction (MAT-MI)}

\author{Lingyun Qiu$^1$ and Fadil Santosa$^1$}
\address{$^1$ Institute for Mathematics and its Applications, University of Minnesota, Minneapolis, MN 55455, USA.}
\ead{\mailto{qiu.lingyun@ima.umn.edu}, \mailto{santosa@umn.edu}}



\begin{abstract}
Magnetoacoustic tomography with magnetic induction (MAT-MI) is a
coupled-physics medical imaging modality for determining conductivity
distribution in biological tissue.  The capability of MAT-MI to
provide high resolution images has been demonstrated
experimentally. MAT-MI involves two steps. The first step is a
well-posed inverse source problem for acoustic wave equation, which
has been well studied in the literature. This paper concerns
mathematical analysis of the second step, a quantitative
reconstruction of the conductivity from knowledge of the internal
data recovered in the first step, using techniques such as time reversal.
The problem is modeled by a system derived from Maxwell's
equations.  We show that a single internal data determines the
conductivity.  A global Lipschitz type stability estimate is obtained.
A numerical approach for recovering the conductivity is proposed and
results from computational experiments are presented.
\end{abstract}

\maketitle



\section{Introduction}
\label{sec:Intro}
Electrical conductivity of the biological tissues can provide
important information for clinical and research purposes. Conductivity
imaging has been a subject of research for decades and the literature
is vast.

Magnetoacoustic tomography with magnetic induction (MAT-MI) is a new
noninvasive modality for imaging electrical conductivity distribution
of biological tissue \cite{Xu2005,Li2006,Mariappan2014}.  In the
experiments, the biological tissue is placed in a static magnetic
field. A pulsed magnetic field is applied and induces an eddy
current inside the conductive tissue.
Consequently, the Lorentz force, the force acting on
currents in the static magnetic field, causes vibrations and the tissue emits
ultrasound waves. The ultrasonic signals are measured around the
object.  MAT-MI belongs to the class of coupled-physics imaging method which
is often refered to as `hybrid imaging'.
For a review on hybrid imaging methods that recover
electrical conductivity distribution, we refer to \cite{Widlak2012}.

Hybrid imaging typically involves two inverse
problems. In MAT-MI the two steps are decoupled. The first step involves an
inverse source problem for the acoustic wave equation. This problem
has been studied
extensively in many works including \cite{Finch2009, Haltmeier2005, Hristova2008, Kuchment2008, Stefanov2009, Qian2011}. The second step, the focus of this work,
is to reconstruct the
spatially varying electrical conductivity from knowledge of the acoustic source.


In the MAT-MI experiment, the object to be imaged is placed in a constant static
magnetic background field $\bvec{B}_0=(0,0,1)$.  A pulsed magnetic stimulation is
introduced.  The pulsed field is of the form $\bvec{B}_1 u(t)$,
where the vector field $\bvec{B}_1$ is a constant and $u(t)$ is the time variation.
The magnetic permeability of biological tissue is approximately equal to that of a vacuum.
Therefore the tissue does not have any noticeable effect on the magnetic field itself.
As a result, the time-dependence of the electromagnetic fields is $u(t)$ and we need only
to consider their spatial dependence.  Because the electric field will depend on conductivity
$\sigma$, we write it as $\bvec{E}_\sigma$.  Let $\Omega$ denote the domain to be imaged.
Then it can be shown that the electric field satisfies
\begin{equation}\label{eqn:MAT-MI}
\left\{
\begin{array}{lll}
  \nabla \times \bvec{E}_\sigma & = \bvec{B}_1, &\qquad \mbox{ in } \Omega,
  \\
  \nabla \cdot (\sigma \bvec{E}_\sigma) & =0, &\qquad \mbox{ in } \Omega,
  \\
  \sigma \bvec{E}_\sigma \cdot \nu &= 0 , &\qquad \mbox{ on } \partial\Omega .
  \end{array}
  \right.
\end{equation}
The first step in the MAT-MI inverse problem is to recover the acoustic source
in the scalar wave equation from observed data at a set of locations.  The
acoustic source is related to the electromagnetic field; knowledge of the
acoustic source in this model is equivalent to knowing the quantity
$\nabla \cdot(\sigma \bvec{E}_\sigma \times \bvec{B}_0)$ throughout $\Omega$.

In this paper, we focus on the second step of
MAT-MI, i.e., reconstruction of the conductivity $\sigma$ from the internal
data given by $\nabla \cdot(\sigma \bvec{E}_\sigma \times \bvec{B}_0)$.
Our main result is that, if the conductivity is a priori
known near the boundary, then it can be uniquely and stably
reconstructed from one internal data. More precisely, the main result
of this work reads as follows.

\begin{theorem}\label{thm:nonlinear-stab2}
Denote the forward map, the map from conductivity to acoustic source, as $F(\sigma):=
\nabla \cdot (\sigma \bvec{E}_\sigma \times \bvec{B}_0)$.
Suppose that $\sigma_1$ and $\sigma_2$ satisfy Assumption~\ref{asmp:sigma} and the support of $\sigma_1 -\sigma_2$ is away from the boundary of $\Omega$ at a distance greater than some constant $r_0>0$. Then, there exists a constant $K$, which only depends on $r_0$, $\lambda$, $\Lambda$ and $\Omega$, such that,
if
\begin{equation}\label{sig-slow}
\|\nabla \sigma_1 \|_{L^\infty} < K,
\end{equation}
then the inequality
  \begin{equation}\label{eqn:nonlinear-stab2}
    \|\sigma_1 - \sigma_2\|_{L^2(\Omega)} \leq 4 \|F(\sigma_1) - F(\sigma_2)\|_{L^2(\Omega)},
  \end{equation}
  holds true.
\end{theorem}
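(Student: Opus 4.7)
The plan is to derive a pointwise identity for $F(\sigma_1)-F(\sigma_2)$ and then test it against $\delta\sigma := \sigma_1-\sigma_2$. Since the first equation of (\ref{eqn:MAT-MI}) gives $\nabla\times(\bvec{E}_{\sigma_1}-\bvec{E}_{\sigma_2})=0$, I would introduce a scalar potential $\phi$ with $\bvec{E}_{\sigma_1}-\bvec{E}_{\sigma_2}=\nabla\phi$. Subtracting the divergence equations of (\ref{eqn:MAT-MI}) and using that $\delta\sigma$ vanishes within distance $r_0$ of $\partial\Omega$ yields the auxiliary elliptic problem
\[
  \nabla\cdot(\sigma_2\nabla\phi)=-\nabla\cdot(\delta\sigma\,\bvec{E}_{\sigma_1}) \text{ in }\Omega, \qquad \sigma_2\,\partial_\nu\phi=0 \text{ on }\partial\Omega.
\]
Using $\nabla\cdot(\bvec{V}\times\bvec{B}_0)=(\nabla\times\bvec{V})\cdot\bvec{B}_0$ (valid because $\bvec{B}_0$ is constant) with $\bvec{V}=\delta\sigma\,\bvec{E}_{\sigma_1}+\sigma_2\nabla\phi$, and invoking $\nabla\times\bvec{E}_{\sigma_1}=\bvec{B}_1$ and $\nabla\times\nabla\phi=0$, this produces
\[
  F(\sigma_1)-F(\sigma_2)=(\bvec{B}_1\cdot\bvec{B}_0)\,\delta\sigma+(\bvec{E}_{\sigma_1}\times\bvec{B}_0)\cdot\nabla\delta\sigma+\bvec{B}_0\cdot(\nabla\sigma_2\times\nabla\phi).
\]

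Multiplying by $\delta\sigma$ and integrating, the transport term becomes $\frac{1}{2}\int(\bvec{E}_{\sigma_1}\times\bvec{B}_0)\cdot\nabla(\delta\sigma)^2\,\dd x$; integrating by parts (no boundary contribution, by the compact support of $\delta\sigma$) and recognising that $\nabla\cdot(\bvec{E}_{\sigma_1}\times\bvec{B}_0)=\bvec{B}_1\cdot\bvec{B}_0$, this contribution collapses to $-\frac{1}{2}(\bvec{B}_1\cdot\bvec{B}_0)\|\delta\sigma\|_{L^2}^2$, leaving
\[
  \int_\Omega(F(\sigma_1)-F(\sigma_2))\,\delta\sigma\,\dd x=\tfrac{1}{2}(\bvec{B}_1\cdot\bvec{B}_0)\|\delta\sigma\|_{L^2}^2+\int_\Omega\delta\sigma\,\bvec{B}_0\cdot(\nabla\sigma_2\times\nabla\phi)\,\dd x.
\]
For the error term I would split $\nabla\sigma_2=\nabla\sigma_1-\nabla\delta\sigma$. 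The $\nabla\delta\sigma$ piece equals $\frac{1}{2}\int\bvec{B}_0\cdot(\nabla(\delta\sigma)^2\times\nabla\phi)\,\dd x$, which is a pure divergence (since $\bvec{B}_0\cdot(\nabla f\times\nabla g)=-\nabla\cdot((f\bvec{B}_0)\times\nabla g)$ when $\bvec{B}_0$ is constant) and hence vanishes by the compact support of $\delta\sigma$. What remains, $\int\delta\sigma\,\bvec{B}_0\cdot(\nabla\sigma_1\times\nabla\phi)\,\dd x$, is dominated by $|\bvec{B}_0|\,\|\nabla\sigma_1\|_{L^\infty}\,\|\delta\sigma\|_{L^2}\,\|\nabla\phi\|_{L^2}$. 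Testing the $\phi$-equation against $\phi$ gives $\|\nabla\phi\|_{L^2}\le\lambda^{-1}\|\bvec{E}_{\sigma_1}\|_{L^\infty(\operatorname{supp}\delta\sigma)}\|\delta\sigma\|_{L^2}$, and interior elliptic regularity for (\ref{eqn:MAT-MI}) on the $r_0$-interior of $\Omega$ yields an $L^\infty$ bound on $\bvec{E}_{\sigma_1}$ there depending only on $r_0,\lambda,\Lambda,\Omega$.

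Choosing $K$ small enough that $|\bvec{B}_0|\,K\,\lambda^{-1}\|\bvec{E}_{\sigma_1}\|_{L^\infty(\operatorname{supp}\delta\sigma)}\le\tfrac{1}{4}(\bvec{B}_1\cdot\bvec{B}_0)$ and applying Cauchy--Schwarz on the left-hand side then gives $\tfrac{1}{4}(\bvec{B}_1\cdot\bvec{B}_0)\|\delta\sigma\|_{L^2}\le\|F(\sigma_1)-F(\sigma_2)\|_{L^2}$; with the normalisation $\bvec{B}_1\cdot\bvec{B}_0=1$ evident from the setup, this is precisely (\ref{eqn:nonlinear-stab2}). The main obstacle is the error term $\int\delta\sigma\,\bvec{B}_0\cdot(\nabla\sigma_2\times\nabla\phi)\,\dd x$: because $\|\nabla\phi\|_{L^2}=O(\|\delta\sigma\|_{L^2})$, it is formally quadratic in $\delta\sigma$, and the smallness of $\|\nabla\sigma_1\|_{L^\infty}$ alone could not control it. The decisive observation is the divergence cancellation that kills its $\nabla\delta\sigma$ half, after which only the $\nabla\sigma_1$ piece survives and the smallness hypothesis suffices to close the estimate.
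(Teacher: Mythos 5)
Your proof is correct and follows essentially the same route as the paper: test $F(\sigma_1)-F(\sigma_2)$ against $\delta\sigma=\sigma_1-\sigma_2$, extract the coercive term $\frac{1}{2}\|\delta\sigma\|_{L^2}^2$ from the curl equation $\nabla\times\bvec{E}_{\sigma_1}=\bvec{B}_1$ via the double integration by parts, and control the remainder using the energy estimate for the potential of $\bvec{E}_{\sigma_1}-\bvec{E}_{\sigma_2}$ together with an interior $L^\infty$ bound on the field over $\operatorname{supp}\delta\sigma$ (which is where $r_0$ enters). The one genuine difference is your treatment of the error term $\int_\Omega\delta\sigma\,\bvec{B}_0\cdot(\nabla\sigma_2\times\nabla\phi)\,\dd x$: the paper bounds the analogous term directly by $\|\nabla\sigma_2\|_{L^\infty}\|\delta\sigma\|_{L^2}\|\nabla\phi\|_{L^2}$, which places the smallness requirement on $\nabla\sigma_2$ even though the hypothesis \eref{sig-slow} is stated for $\nabla\sigma_1$; your splitting $\nabla\sigma_2=\nabla\sigma_1-\nabla\delta\sigma$, together with the observation that the $\nabla\delta\sigma$ half is the integral of a divergence of a compactly supported field and therefore vanishes, leaves only $\|\nabla\sigma_1\|_{L^\infty}$ in the final bound, exactly matching the stated hypothesis. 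This cancellation is a small but real improvement on the printed argument; the paper's version can alternatively be repaired by swapping which factor carries the conductivity difference in the decomposition of $\sigma_1\bvec{E}_1-\sigma_2\bvec{E}_2$, i.e.\ writing it as $(\sigma_1-\sigma_2)\bvec{E}_2+\sigma_1(\bvec{E}_1-\bvec{E}_2)$, so that $\nabla\sigma_1$ appears in the error term directly.
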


During the completion of this work, we discovered a recent paper by Ammari, Boulier
and Millien \cite{Ammari2015}.  Their work also focused on the conductivity reconstruction
aspect of MAT-MI.  What is different is that the authors chose to reconstruct first
the current density in the medium.  They propose methods to solve for conductivity from
current density. In our approach, we directly deal with the relationship between
the acoustic source and the electromagnetic field, and propose a method that finds
the conductivity from the acoustic source.

The rest of the paper is organized as follows. Section 2 introduces
the notation used and basic results needed.  In
Section~\ref{sec:model}, we study the mathematical model of the second
step of MAT-MI
and the linearized version of this problem. Section~\ref{sec:stab} is devoted
to addressing the uniqueness and stability estimate of both linearized
and nonlinear problems.  In Section 5, we propose an numerical method
for solving the inverse problem and present some results from
computational experiments.  A final section discusses our findings.

\section{Notations and preliminaries}
\label{sec:prelim}
We begin by introducing the notations for the the mathematical
analysis. Throughout this paper, the standard notations for continuous
differentiable function spaces and Sobolev spaces are used. Let
$\Omega$ be a bounded domain in $\mathbb{R}^3$ with Lipschitz boundary
$\partial \Omega$. A typical point $x=(x_1,x_2,x_3)\in\mathbb{R}^3$
denotes the spatial variable. We use the notation $C^\infty(\Omega)$
for infinitely differentiable functions on $\Omega$ and
$C_0^\infty(\Omega)$ is a subset of $C^\infty(\Omega)$ which contains
the functions with compact support.  We use $\langle \cdot , \cdot \rangle$
to denote the inner product in the Hilbert space $L^2(\Omega)$.
For $p\geq 1$, we denote by $W^{1,p}(\Omega)$ the $L^p$-based Sobolev
spaces on $\Omega$ with the usual norm,
\[
\|u\|_{W^{1,p}(\Omega)} = \|u\|_{L^p(\Omega)} + \sum_{n=1} ^3 \left\|\frac{\partial u}{\partial x_n} \right\|_{L^p(\Omega)} .
\]
In the case $p=2$, we use the notation $H^1(\Omega) =
W^{1,2}(\Omega)$, which is a Hilbert space.  The Sobolev space
$H^1_0(\Omega)$ is defined as the closure of $C_0^\infty(\Omega)$ in
$H^1(\Omega)$. The dual space of $H^1_0(\Omega)$ is denoted by
$H^{-1}(\Omega)$.  If there is no danger of confusion, we omit the
domain $\Omega$ and abbreviate with $L^2$, $W^{1,p}$, $H^1$, $H^1_0$ and
$H^{-1}$.  In the following, we do not distinguish in the notation for
inner product, function spaces and the corresponding norms between
scalar- and vector-valued functions.

\begin{assumption}\label{asmp:sigma}
   Let $\sigma$ be a positive function belongs to $W^{1,\infty}$ and assume that
\begin{equation}\label{ellipticity}
   \sigma(x) \geq \lambda, \quad \forall x\in \Omega.
  \end{equation}
  and
  \[
   \|\sigma\|_{W^1,\infty} \leq \Lambda
  \]
for some constants $\lambda,\Lambda >0$.
\end{assumption}


We start with stating several useful results on the elliptic partial differential equations with Neumann boundary condition.

\begin{definition}\label{def:solution}
We say that $u\in H^1$ is a weak solution of the Neumann boundary value problem,
  \begin{equation}\label{eqn:N-bd}
    \left\{
     \begin{array}{lll}
   \nabla \cdot (\sigma \nabla u) & = - \nabla \cdot \bvec{E}, &\qquad \mbox{ in } \Omega,
  \\
     (\sigma \nabla u + \bvec{E}) \cdot \nu & =  0 , &\qquad \mbox{ on } \partial\Omega,
  \end{array}
    \right.
  \end{equation}
  if
  \[
  \int_{\Omega} \sigma \nabla u \cdot \nabla \varphi \,\dd x = -\int_{\Omega} \bvec{E} \cdot \nabla \varphi \,\dd x, \quad \forall \varphi \in H^1.
  \]
\end{definition}

\medskip\medskip

We need the following regularity result and standard energy estimate of the gradient.

\begin{proposition}\label{prop:reg}
Suppose that $\sigma$ satisfies Assumption~\ref{asmp:sigma}.
For field $\bvec{E}\in L^2$, the Neumann problem \eref{eqn:N-bd} has a solution $u\in H^1$.
The solution $u$ is unique up to an additive constant and satisfies the estimate,
  \begin{equation}\label{grad-est}
    \|\nabla u\|_{L^2} \leq \lambda^{-1} \|\bvec{E}\|_{L^2}.
  \end{equation}
\end{proposition}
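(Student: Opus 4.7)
The plan is to set the problem up variationally via the Lax--Milgram theorem on the quotient space $H^1(\Omega)/\mathbb{R}$, then extract the gradient bound by testing with the solution itself.

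First I would identify the right functional framework. Because only $\nabla u$ appears in the bilinear form, the natural space is the closed subspace
\[
V = \Bigl\{ v \in H^1(\Omega) : \int_\Omega v \,\dd x = 0 \Bigr\},
\]
or equivalently the quotient $H^1(\Omega)/\mathbb{R}$. The weak formulation of \eref{eqn:N-bd} is to find $u \in V$ such that
\[
a(u,\varphi) := \int_\Omega \sigma \nabla u \cdot \nabla \varphi \,\dd x = -\int_\Omega \bvec{E} \cdot \nabla \varphi \,\dd x =: \ell(\varphi), \qquad \forall \varphi \in V.
\]
Note the right-hand side $\ell$ already annihilates constants (as it should: integrating the PDE against $\varphi \equiv 1$ and using the Neumann condition gives $0=0$, so no compatibility condition needs to be separately imposed on $\bvec E$). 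Moreover $\ell$ is bounded on $V$ since $|\ell(\varphi)| \le \|\bvec E\|_{L^2}\|\nabla\varphi\|_{L^2}$.

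Next I would verify the hypotheses of Lax--Milgram. Continuity of $a$ is immediate from the upper bound $\sigma \le \Lambda$ in Assumption~\ref{asmp:sigma}. Coercivity uses the lower bound $\sigma(x) \ge \lambda$ together with the Poincaré--Wirtinger inequality on $V$: there is $C_P(\Omega)>0$ with $\|\varphi\|_{L^2} \le C_P \|\nabla \varphi\|_{L^2}$ for all $\varphi \in V$, so
\[
a(\varphi,\varphi) \ge \lambda \|\nabla \varphi\|_{L^2}^2 \ge \frac{\lambda}{1+C_P^2}\|\varphi\|_{H^1}^2.
\]
Lax--Milgram then yields a unique $u \in V$ solving the weak equation. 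Since any other $H^1$ solution differs from $u$ by an element of the kernel of $a$ on $H^1$, which (using $\sigma\ge\lambda$) consists only of constants, the solution of \eref{eqn:N-bd} is unique up to an additive constant. Extending from test functions in $V$ to all of $H^1$ is automatic because adding a constant to $\varphi$ changes neither side.

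Finally, the gradient estimate is the easy step: choose $\varphi = u$ in the weak formulation to get
\[
\int_\Omega \sigma |\nabla u|^2 \,\dd x = -\int_\Omega \bvec{E} \cdot \nabla u \,\dd x \le \|\bvec{E}\|_{L^2}\,\|\nabla u\|_{L^2},
\]
and bound the left side below by $\lambda \|\nabla u\|_{L^2}^2$ using \eref{ellipticity}. Dividing by $\|\nabla u\|_{L^2}$ (the case $\nabla u \equiv 0$ being trivial) gives \eref{grad-est}. I do not foresee any real obstacle: the only delicate point is to remember to quotient out constants so that coercivity holds, and to notice that the weak formulation's right-hand side is automatically compatible.
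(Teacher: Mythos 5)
Your proof is correct and follows essentially the same route as the paper: the paper also invokes the Lax--Milgram theorem for existence and uniqueness up to a constant (deferring the details to a reference), and derives \eref{grad-est} exactly as you do, by taking $\varphi = u$ in the weak formulation, bounding below with the ellipticity condition \eref{ellipticity}, and applying Cauchy--Schwarz. The only difference is that you spell out the quotient-space/Poincar\'e--Wirtinger details that the paper leaves to the cited reference.
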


\begin{proof}
The proof of the existence and uniqueness up to an additive constant is a standard result by the Lax-Milgram Theorem. We refer the readers to \cite{Taylor2011}. In the following, we prove the gradient estimate \eref{grad-est}.

It follows from the ellipticity condition \eref{ellipticity} that
  \[
    \lambda \|\nabla u \|_{L^2}^2 \leq \int_{\Omega} \sigma |\nabla u|^2 \, \dd x.
    \]
Taking the test function $\varphi$ in Definition~\ref{def:solution} to be the solution $u$, we have that
\[
\int_{\Omega} \sigma \nabla u \cdot \nabla u \, \dd x =  - \int_{\Omega} \bvec{E} \cdot  \nabla u \, \dd x.
\]
Consequently, applying the Cauchy-Schwarz inequality, we obtain that
  \[
   \lambda \|\nabla u \|_{L^2}^2 \leq \left| - \int_{\Omega} \bvec{E} \cdot  \nabla u \, \dd x \right| \leq \|\nabla u \|_{L^2} \|\bvec{E}\|_{L^2},
  \]
  and \eref{grad-est} follows. \cvd
\end{proof}

\section{Analysis of the forward problem}
\label{sec:model}

\subsection{The forward problem}

The second step of MAT-MI is modeled by \eref{eqn:MAT-MI},
where $\nu$ is the unit outer normal vector of $\partial \Omega$ and
$\bvec{B}_1 = (0,0,1)$ is a constant vector.    The data for this
inverse problem is the acoustic source recovered from the first step, namely,
$\nabla \cdot (\sigma
\bvec{E}_\sigma \times \bvec{B}_0)$ with $\bvec{B}_0 = (0,0,1)$.
The inverse problem of the second step of MAT-MI consists of
reconstruction of conductivity $\sigma$ from knowledge of $\nabla
\cdot (\sigma \bvec{E}_\sigma \times \bvec{B}_0)$.

We refer the readers to
\cite{Colton2013} for the regularity results of the Maxwell's
equations. In Proposition~\ref{prop:E-bd}, we show some regularity
results of our reduced system \eref{eqn:MAT-MI}.

\begin{definition}\label{def:vec-solution}
We say that $\bvec{E}_\sigma\in L^2$ is a weak solution of the \eref{eqn:MAT-MI}
  if
   \[
    \int_{\Omega} \bvec{E}_\sigma \cdot (\nabla \times \bvec{\Phi}) \,\dd x = \int_{\Omega} \bvec{\Phi} \cdot \bvec{B}_1 \,\dd x, \quad \forall \bvec{\Phi} \in H_0^1,
    \]
    and
  \[
  \int_{\Omega} \sigma \bvec{E}_\sigma \cdot \nabla \varphi \,\dd x = 0, \quad \forall \varphi \in H^1.
  \]
\end{definition}

\medskip\medskip

We define the forward problem as
\begin{equation}\label{forward-F}
\begin{array}{rrl}
    F: & W^{1,\infty} \rightarrow  & L^2 ,
    \\
    &\sigma \mapsto & \nabla \cdot (\sigma \bvec{E}_\sigma \times \bvec{B}_0).
\end{array}
\end{equation}
Next, we introduce a proposition on the existence, uniqueness and uniform $L^2$-boundedness of the electrical field $\bvec{E}_\sigma$. This implies that forward operator $F$ is well-defined.
\begin{proposition}\label{prop:E-bd}
Let $\sigma$ satisfy Assumption~\ref{asmp:sigma}. Then the system \eref{eqn:MAT-MI} is uniquely solvable and there exists a constant $C_1$ depending on $\lambda$, $\Lambda$ and $\Omega$, such that
  \[
 \|\bvec{E}_\sigma \|_{L^2} \leq C_1.
 \]
\end{proposition}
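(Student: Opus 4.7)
The plan is to reduce the first order system \eref{eqn:MAT-MI} to the scalar Neumann problem already handled by Proposition~\ref{prop:reg}. First I would construct a smooth particular vector field $\bvec{E}_0$ satisfying $\nabla \times \bvec{E}_0 = \bvec{B}_1 = (0,0,1)$; the explicit affine choice $\bvec{E}_0(x) = (0, x_1, 0)$ works, and its $L^2(\Omega)$ norm is controlled purely by the geometry of $\Omega$.

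Next I would make the Helmholtz-type ansatz $\bvec{E}_\sigma = \bvec{E}_0 + \nabla u$. Since $\nabla \times \nabla u = 0$, the curl equation is automatically fulfilled. The remaining two equations in \eref{eqn:MAT-MI} become
\[
\nabla \cdot (\sigma \nabla u) = -\nabla \cdot (\sigma \bvec{E}_0) \text{ in } \Omega, \qquad (\sigma \nabla u + \sigma \bvec{E}_0) \cdot \nu = 0 \text{ on } \partial \Omega,
\]
which is precisely the Neumann problem \eref{eqn:N-bd} with source field $\sigma \bvec{E}_0 \in L^2$. Proposition~\ref{prop:reg} then provides a solution $u \in H^1$, unique up to an additive constant, obeying $\|\nabla u\|_{L^2} \le \lambda^{-1} \|\sigma \bvec{E}_0\|_{L^2} \le \Lambda \lambda^{-1} \|\bvec{E}_0\|_{L^2}$. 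Setting $\bvec{E}_\sigma = \bvec{E}_0 + \nabla u$ and using the triangle inequality gives existence together with the uniform bound $\|\bvec{E}_\sigma\|_{L^2} \le (1 + \Lambda \lambda^{-1}) \|\bvec{E}_0\|_{L^2} =: C_1$, which depends only on $\lambda$, $\Lambda$, and $\Omega$ as required.

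For uniqueness I would take two weak solutions in the sense of Definition~\ref{def:vec-solution} and subtract. Their difference $\bvec{E} \in L^2$ is weakly curl-free (the first identity of Definition~\ref{def:vec-solution} with right-hand side zero) and satisfies $\int_\Omega \sigma \bvec{E} \cdot \nabla \varphi \, \dd x = 0$ for every $\varphi \in H^1$. Assuming $\Omega$ is simply connected, a curl-free $L^2$ field is a gradient, so $\bvec{E} = \nabla v$ with $v \in H^1$; choosing $\varphi = v$ in the divergence identity and using the ellipticity \eref{ellipticity} gives $\lambda \|\nabla v\|_{L^2}^2 \le \int_\Omega \sigma |\nabla v|^2 \,\dd x = 0$, so $\bvec{E} = 0$.

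The only non-routine point is the passage from the weak curl-free condition to the scalar potential $v \in H^1$, which uses that $\Omega$ is simply connected; if one wanted to drop this hypothesis, the harmonic Neumann fields would have to be killed separately by exploiting the divergence equation $\nabla \cdot (\sigma \bvec{E}_\sigma) = 0$ together with the normal boundary condition. Everything else is a direct appeal to Proposition~\ref{prop:reg}.
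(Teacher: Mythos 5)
Your proof is correct and follows essentially the same route as the paper: pick an explicit affine field with curl $(0,0,1)$ (you use $(0,x_1,0)$, the paper uses $\frac{1}{2}(-y,x,0)$), write $\bvec{E}_\sigma$ as that field plus a gradient, and reduce to the Neumann problem of Proposition~\ref{prop:reg}. The only differences are cosmetic — the paper additionally infimizes over affine shifts of the particular field to sharpen $C_1$, while you are more explicit than the paper about the simple-connectivity hypothesis needed to pass from curl-free to a gradient, a point the paper glosses over.
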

\begin{proof}
This proposition will be derived as a consequence of Proposition~\ref{prop:reg}. Let us first reduce the system \eref{eqn:MAT-MI} to a Neumann boundary problem.
  Let $  \bvec{\tilde{E}} = \frac{1}{2}(-y,x,0)$.
   We can readily check that $\nabla \times \bvec{\tilde{E}} = \bvec{B}_1$. Hence $\nabla \times(\bvec{E}_\sigma - \bvec{\tilde{E}} ) = 0$ and we can write $\bvec{E}_\sigma = \bvec{\tilde{E}} + \nabla u$. Substituting this into \eref{eqn:MAT-MI}, we have that $u$ solves the
   Neumann boundary problem,
  \begin{equation}\label{eqn:E-decomp}
    \left\{
     \begin{array}{lll}
   \nabla \cdot (\sigma \nabla u) & = - \nabla \cdot (\sigma \bvec{\tilde{E}}), &\qquad \mbox{ in } \Omega,
  \\
    (\sigma \nabla u + \sigma \bvec{\tilde{E}})\cdot \nu & = 0, &\qquad \mbox{ on } \partial\Omega.
  \end{array}
    \right.
  \end{equation}
  The existence of $u$ and uniqueness of $\nabla u$ follows from Proposition~\ref{prop:reg}. For the uniqueness of $\bvec{E}_\sigma$, we consider the equations
   \begin{equation}\label{homo-eq}
    \left\{
     \begin{array}{lll}
   \nabla \cdot (\sigma \nabla v) & = 0, &\qquad \mbox{ in } \Omega,
  \\
    \sigma \nabla v \cdot \nu & = 0 , &\qquad \mbox{ on } \partial\Omega.
  \end{array}
    \right.
  \end{equation}
  If both $\bvec{E}_1$ and $\bvec{E}_2$ are solutions to the system \eref{eqn:MAT-MI}, then we have that $\bvec{E}_1-\bvec{E}_2 = \nabla v$ and $v$ solves the equations \eref{homo-eq}. By Proposition~\ref{prop:reg}, the only $H^1$ solutions to \eref{homo-eq} are constants. Hence $\nabla v$ vanishes and $\bvec{E}_\sigma$ is unique.

What remains is to show the $L^2$ boundedness of $\bvec{E}_\sigma$.
Applying Proposition~\ref{prop:reg} to $u$, we have that
 \[
 \|\nabla u\|_{L^2} \leq \lambda^{-1} \|\sigma \bvec{\tilde{E}} \|_{L^2}.
 \]
 Hence,
 \[
 \|\bvec{E}_\sigma \|_{L^2} = \|\bvec{\tilde{E}} + \nabla u\|_{L^2} \leq (\Lambda/\lambda + 1) \| \bvec{\tilde{E}} \|_{L^2}.
 \]
 Note that we can choose $\bvec{\tilde{E}} = \frac{1}{2}(-y+a,x+b,0)$ and repeat the above argument for any constants $a$ and $b$. It follows that,
  \[
 \|\bvec{E}_\sigma \|_{L^2} \leq C_1,
 \]
 where
 \[
 C_1 = \frac{1}{2}(\Lambda/\lambda  + 1) \inf_{a,b} \|(-y+a,x+b,0)\|_{L^2},
 \]
 only depends on $\lambda , \Lambda$ and $\Omega$. \cvd

  \end{proof}

\subsection{Linearizaton of the forward map}
Recall that the distribution of the electric field $\bvec{E}_\sigma$
depends nonlinearly on the conductivity $\sigma$ and $\nabla \cdot
(\sigma \bvec{E}_\sigma \times \bvec{B}_0)$ is nonlinear with respect
to $\sigma$. It is natural to start by linearizing the relationship between
conductivity and data. In this
section, we introduce the linearized of the inverse problem. We
first examine the Fr\'echet differentiability of the forward operator
$F$. Then, some useful properties of the Fr\'echet derivative at
$\sigma$, $DF_\sigma$, are presented.

To introduce the Fr\'echet derivative, we consider the following Neumann boundary problem,
\begin{equation}\label{eqn:DF-2}
\left\{
  \begin{array}{lll}
   \nabla \cdot (\sigma \nabla \varphi_h) & = - \nabla \cdot (h \bvec{E}_\sigma), &\qquad \mbox{ in } \Omega,
  \\
     (\sigma\nabla \varphi_h + h\bvec{E}_\sigma) \cdot \nu & =  0 , &\qquad \mbox{ on } \partial\Omega,
  \end{array}
  \right.
\end{equation}
where $h\in W^{1,\infty}$ is the increment to the conductivity.
\begin{theorem}\label{thm:Fre-diff}
For $\sigma$ satisfying Assumption~\ref{asmp:sigma}, the forward operator $F$, defined in \eref{forward-F}, is bounded and Fr\'echet differentiable at $\sigma$. Its Fr\'echet derivative at $\sigma$, $DF_\sigma$, is given by
  \begin{equation}\label{Fre-der}
    DF_\sigma(h) = \nabla \cdot ((\sigma \nabla \varphi_h + h \bvec{E}_\sigma) \times \bvec{B}_0),
  \end{equation}
  where $\varphi_h$ solves \eref{eqn:DF-2}, and satisfies
  \begin{equation}\label{DF-bd}
    \|DF_{\sigma}(h)\|_{L^2} \leq C_2 \|h\|_{W^{1,\infty}}, \quad \forall h\in W^{1,\infty},
  \end{equation}
  for some constant $C_2$ depends on $\lambda, \Lambda$ and $\Omega$.
\end{theorem}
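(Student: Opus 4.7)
The plan is to reduce everything to the Neumann energy estimate in Proposition~\ref{prop:reg} applied repeatedly. First, I would establish boundedness of $F(\sigma)$ as a well-defined element of $L^2$. Using the vector identity $\nabla\cdot(\bvec{A}\times\bvec{B}_0)=\bvec{B}_0\cdot(\nabla\times\bvec{A})$ (valid since $\bvec{B}_0$ is constant) together with $\nabla\times\bvec{E}_\sigma=\bvec{B}_1$, one obtains
\[
F(\sigma)=\bvec{B}_0\cdot(\nabla\sigma\times\bvec{E}_\sigma)+\sigma\,\bvec{B}_0\cdot\bvec{B}_1,
\]
so that $\|F(\sigma)\|_{L^2}\lesssim \|\nabla\sigma\|_{L^\infty}\|\bvec{E}_\sigma\|_{L^2}+\|\sigma\|_{L^2}$, and Proposition~\ref{prop:E-bd} gives the required bound in terms of $\lambda,\Lambda,\Omega$.

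Next, to identify the Fr\'echet derivative, I would compare $\bvec{E}_{\sigma+h}$ with $\bvec{E}_\sigma$ for small $h\in W^{1,\infty}$. Since both satisfy $\nabla\times\cdot=\bvec{B}_1$, the difference $\bvec{w}:=\bvec{E}_{\sigma+h}-\bvec{E}_\sigma$ is curl-free and can be written $\bvec{w}=\nabla\varphi$. Subtracting the divergence equations and boundary conditions satisfied by $(\sigma+h)\bvec{E}_{\sigma+h}$ and $\sigma\bvec{E}_\sigma$ shows that $\varphi$ solves a Neumann problem identical to \eref{eqn:DF-2} but with source $h\bvec{E}_{\sigma+h}$ rather than $h\bvec{E}_\sigma$. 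Thus the residual $\psi:=\varphi-\varphi_h$ satisfies
\[
\nabla\cdot(\sigma\nabla\psi)=-\nabla\cdot(h\nabla\varphi),\qquad (\sigma\nabla\psi+h\nabla\varphi)\cdot\nu=0.
\]
Applying Proposition~\ref{prop:reg} twice gives $\|\nabla\varphi\|_{L^2}\le\lambda^{-1}\|h\|_{L^\infty}\|\bvec{E}_{\sigma+h}\|_{L^2}$ and then $\|\nabla\psi\|_{L^2}\le\lambda^{-2}\|h\|_{L^\infty}^2\,\|\bvec{E}_{\sigma+h}\|_{L^2}$, a genuine second-order bound.

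For the differentiability claim I would then write
\[
F(\sigma+h)-F(\sigma)-DF_\sigma(h)=\nabla\cdot\big((\sigma\nabla\psi+h\nabla\varphi)\times\bvec{B}_0\big),
\]
and apply the same vector identity as in Step~1 to obtain
\[
F(\sigma+h)-F(\sigma)-DF_\sigma(h)=\bvec{B}_0\cdot(\nabla\sigma\times\nabla\psi)+\bvec{B}_0\cdot(\nabla h\times\nabla\varphi).
\]
Each term is bounded in $L^2$ by $O(\|h\|_{W^{1,\infty}}^2)$ using the gradient estimates above, which yields Fr\'echet differentiability with derivative $DF_\sigma$. The estimate \eref{DF-bd} on $DF_\sigma(h)$ itself then follows by expanding $\nabla\cdot((\sigma\nabla\varphi_h+h\bvec{E}_\sigma)\times\bvec{B}_0)$ via the same identity, controlling $\|\nabla\varphi_h\|_{L^2}$ through Proposition~\ref{prop:reg}, and collecting constants depending only on $\lambda,\Lambda,\Omega$.

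The main obstacle is the quadratic bookkeeping in Step~2: the naive decomposition would set $\bvec{w}=\nabla\varphi_h$, but $\varphi_h$ uses $\bvec{E}_\sigma$ as data while the true increment is governed by $\bvec{E}_{\sigma+h}$, so one has to introduce the auxiliary $\psi$ and carefully invoke Proposition~\ref{prop:reg} twice (once for $\varphi$, once for $\psi$) to obtain the genuine $\|h\|_{W^{1,\infty}}^2$ remainder. Once this is handled cleanly, the rest reduces to applications of Cauchy--Schwarz and the vector-calculus identity for $\nabla\cdot(\cdot\times\bvec{B}_0)$.
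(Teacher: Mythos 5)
Your proposal is correct and follows essentially the same route as the paper: decompose the curl-free increment $\bvec{E}_{\sigma+h}-\bvec{E}_\sigma$ as a gradient solving a Neumann problem with source $h\bvec{E}_{\sigma+h}$, subtract $\varphi_h$ to get a second Neumann problem whose source is $O(\|h\|^2)$, apply Proposition~\ref{prop:reg} twice, and expand the divergences via $\nabla\cdot(\bvec{A}\times\bvec{B}_0)=\bvec{B}_0\cdot(\nabla\times\bvec{A})$. Your auxiliary functions $\varphi,\psi$ are exactly the paper's $u,v$, and the constants you collect match; the only omitted (trivial) point is the explicit check that $h\mapsto DF_\sigma(h)$ is linear.
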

\begin{proof}
We first prove the boundedness of $F$.
We can write
\begin{eqnarray*}
F(\sigma) = \nabla \cdot (\sigma \bvec{E}_\sigma \times \bvec{B}_0)
\\
= \sigma \nabla \cdot ( \bvec{E}_\sigma \times \bvec{B}_0) + \nabla \sigma \cdot ( \bvec{E}_\sigma \times \bvec{B}_0) = \sigma + \nabla \sigma \cdot ( \bvec{E}_\sigma \times \bvec{B}_0).
\end{eqnarray*}
It follows, by boundedness of $\sigma$ and Proposition~\ref{prop:E-bd}, that
\[
\|F(\sigma)\|_{L^2} \leq \|\sigma\|_{L^2} + \|\nabla \sigma\|_{L^\infty} \|\bvec{E}_\sigma\|_{L^2} \leq (|\Omega|^{1/2} + C_1) \|\sigma\|_{W{1,^\infty}},
\]
where $C_1$ is the same constant as in Proposition~\ref{prop:E-bd}.

Next, we show the Fr\'echet differentiability of $F$ at $\sigma$. Consider the data
  \[
  F(\sigma + h)= \nabla \cdot ((\sigma + h) \bvec{E}_{\sigma + h} \times \bvec{B}_0)
   \]
   for some $h \in W^{1,\infty}$ such that $\sigma+h$ also satisfies Assumption~\ref{asmp:sigma}, where $\bvec{E}_{\sigma+h}$ is the solution to \eref{eqn:MAT-MI} with $\sigma$ replaced by $\sigma+h$. Note that
   \[
   \nabla \times (\bvec{E}_{\sigma+h} - \bvec{E}_\sigma) =0.
    \]
  Hence we can write $\bvec{E}_{\sigma+h} - \bvec{E}_\sigma = \nabla u$. Substituting this into the equations for $\bvec{E}_{\sigma+h}$  and $\bvec{E}_\sigma$, we obtain that $u$ solves
  \begin{equation}
  \left\{
  \begin{array}{lll}
   \nabla \cdot (\sigma \nabla u) & = - \nabla \cdot (h \bvec{E}_{\sigma+h}), &\qquad \mbox{ in } \Omega,
  \\
     (\sigma \nabla u + h \bvec{E}_{\sigma+h})\cdot \nu & =  0 , &\qquad \mbox{ on } \partial\Omega.
  \end{array}
  \right.
\end{equation}
 Applying Proposition~\ref{prop:reg} to $u$, we have
 \begin{equation}\label{est-u}
   \|\nabla u\|_{L^2} \leq \lambda^{-1} \|h \bvec{E}_{\sigma+h}\|_{L^2}.
 \end{equation}
 Let $v = u -\varphi_h$, where $\varphi_h$ solves \eref{eqn:DF-2}. Then, $v$ solves
  \begin{equation}
  \left\{
  \begin{array}{lll}
   \nabla \cdot (\sigma \nabla v) & = - \nabla \cdot (h \nabla u), &\qquad \mbox{ in } \Omega,
  \\
    (\sigma \nabla v + h \nabla u) \cdot \nu & =  0 , &\qquad \mbox{ on } \partial\Omega.
  \end{array}
  \right.
\end{equation}
 Applying Proposition~\ref{prop:reg} to $v$, we have
  \begin{equation}\label{est-v}
   \|\nabla v\|_{L^2} \leq \lambda^{-1} \|h \nabla u\|_{L^2}.
 \end{equation}

 To estimate the remainder terms, we write
\begin{eqnarray*}
& F(\sigma +h) - F(\sigma) - \nabla \cdot ((\sigma \nabla \varphi_h + h \bvec{E}_\sigma) \times \bvec{B}_0)
\\
= & \nabla \cdot ((\sigma(\bvec{E}_{\sigma+h} - \bvec{E}_\sigma - \nabla \varphi_h) + h (\bvec{E}_{\sigma+h} -\bvec{E}_\sigma)) \times \bvec{B}_0 )
\\
= & \nabla \cdot ((\sigma \nabla v + h \nabla u) \times \bvec{B}_0)
\\
= & \nabla \sigma \cdot (\nabla v \times \bvec{B}_0)+ \nabla h  \cdot (\nabla u\times \bvec{B}_0).
\end{eqnarray*}
Therefore, by \eref{est-u}, \eref{est-v} and Proposition~\ref{prop:E-bd}, we have
\begin{eqnarray*}
&\| F(\sigma +h) - F(\sigma) - \nabla \cdot ((\sigma \nabla \varphi_h + h \bvec{E}_\sigma) \times \bvec{B}_0) \|_{L^2}
\\
 =  & \|\nabla \sigma \cdot (\nabla v \times \bvec{B}_0)+ \nabla h \cdot (\nabla u\times \bvec{B}_0)\|_{L^2}
 \\
 \leq & \|\nabla \sigma\|_{L^\infty} \|\nabla v \|_{L^2} + \|\nabla h \|_{L^\infty} \|\nabla u\|_{L^2}
 \\
 \leq & C_1 \Lambda \lambda^{-2} \|h\|^2_{L^\infty} + C_1 \lambda^{-1} \|\nabla h \|_{L^\infty} \|h\|_{L^\infty}.
\end{eqnarray*}
We can readily check the linearity of the operator maps $h$ to $ \nabla \cdot ((\sigma \nabla \varphi_h + h \bvec{E}_\sigma) \times \bvec{B}_0)$.
This complete the proof of Fr\'echet differentiability of $F$  at $\sigma$.

What remains is to show that the formal Fr\'echet derivative $DF_\sigma$ is a bounded linear operator. Note that
\begin{eqnarray*}
   DF_\sigma(h) = \nabla \cdot ((\sigma \nabla \varphi_h + h \bvec{E}_\sigma) \times \bvec{B}_0)
\\
= \sigma \nabla \cdot ( \nabla \varphi_h  \times \bvec{B}_0) + \nabla \sigma \cdot ( \nabla \varphi_h  \times \bvec{B}_0)  + h \nabla \cdot ( \bvec{E}_\sigma \times \bvec{B}_0) + \nabla h \cdot ( \bvec{E}_\sigma \times \bvec{B}_0)
 \\
 = h + \nabla \sigma \cdot ( \nabla \varphi_h  \times \bvec{B}_0) + \nabla h \cdot ( \bvec{E}_\sigma \times \bvec{B}_0).
\end{eqnarray*}
By applying Proposition~\ref{prop:reg} to $\varphi_h$ and Proposition~\ref{prop:E-bd} to $\bvec{E}_\sigma$, we conclude that
\begin{eqnarray*}
& \|DF_\sigma(h)\|_{L^2}
\\
=& \|h + \nabla \sigma \cdot ( \nabla \varphi_h  \times \bvec{B}_0) + \nabla h \cdot ( \bvec{E}_\sigma \times \bvec{B}_0)\|_{L^2}
\\
\leq & \|h\|_{L^2} + \Lambda\|\nabla \varphi_h\|_{L^2} + \|\nabla h\|_{L^\infty} \|\bvec{E}_\sigma\|_{L^2}
\\
\leq & |\Omega|^{1/2}\|h\|_{L^\infty} + \Lambda \lambda^{-1} \|h\bvec{E}_\sigma\|_{L^2} + C_1\|\nabla h\|_{L^\infty}
\\
\leq & (|\Omega|^{1/2} + C_1( \Lambda \lambda^{-1} + 1)) \|h\|_{W^{1,\infty}}.
\end{eqnarray*}
\cvd
\end{proof}
%

\section{Uniqueness and stability}
\label{sec:stab}
In the following theorem, we obtain a Lipschitz type stability estimate for the inverse problem under certain conditions on the conductivity. The uniqueness of the  inverse problem follows.

\begin{theorem}\label{thm:linear-stab}
 Suppose that $\sigma$ satisfy Assumption~\ref{asmp:sigma}. If $\sigma$ only depends on the third component of the spatial variable, i.e, $\sigma(x) = \sigma(x_3)$, then the inequality
  \begin{equation}\label{lower-bd-Fre}
    \|DF_{\sigma}(h)\|_{L^2(\Omega} \geq \frac{1}{2} \|h\|_{L^2(\Omega)}
  \end{equation}
  holds true for any $h\in W_0^{1,\infty}(\Omega)$.
\end{theorem}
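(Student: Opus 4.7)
The plan is to exploit the formula for $DF_\sigma(h)$ already derived in the proof of Theorem~\ref{thm:Fre-diff}, namely
\[
DF_\sigma(h) = h + \nabla \sigma \cdot (\nabla \varphi_h \times \bvec{B}_0) + \nabla h \cdot (\bvec{E}_\sigma \times \bvec{B}_0),
\]
and to pair it with $h$ in $L^2$.

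The first observation is that, under the assumption $\sigma = \sigma(x_3)$, the middle term vanishes identically. Indeed $\nabla \sigma = (0,0,\sigma'(x_3))$ is parallel to $\bvec{B}_0 = (0,0,1)$, whereas for any smooth $\varphi$ the cross product $\nabla \varphi \times \bvec{B}_0 = (\partial_2 \varphi, -\partial_1 \varphi, 0)$ has vanishing third component. Hence
\[
DF_\sigma(h) = h + \nabla h \cdot (\bvec{E}_\sigma \times \bvec{B}_0).
\]

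The second step is the key computation: I would compute $\langle DF_\sigma(h), h\rangle$. Because $h \in W_0^{1,\infty}(\Omega)$ has compact support in $\Omega$, writing $h \nabla h = \tfrac{1}{2}\nabla(h^2)$ and integrating by parts gives
\[
\int_\Omega h \,\nabla h \cdot (\bvec{E}_\sigma \times \bvec{B}_0)\,\dd x = -\frac{1}{2}\int_\Omega h^2 \,\nabla \cdot (\bvec{E}_\sigma \times \bvec{B}_0)\,\dd x.
\]
The vector identity $\nabla \cdot (\bvec{E}_\sigma \times \bvec{B}_0) = (\nabla \times \bvec{E}_\sigma) \cdot \bvec{B}_0 - \bvec{E}_\sigma \cdot (\nabla \times \bvec{B}_0)$, together with the first equation of \eref{eqn:MAT-MI} and the fact that $\bvec{B}_0$ is constant, evaluates this divergence to $\bvec{B}_1 \cdot \bvec{B}_0 = 1$. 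Combining these yields
\[
\langle DF_\sigma(h), h\rangle = \|h\|_{L^2}^2 - \frac{1}{2}\|h\|_{L^2}^2 = \frac{1}{2}\|h\|_{L^2}^2.
\]

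Finally, Cauchy--Schwarz gives $\tfrac{1}{2}\|h\|_{L^2}^2 \leq \|DF_\sigma(h)\|_{L^2}\|h\|_{L^2}$, from which \eref{lower-bd-Fre} follows. I expect no real obstacle: the cancellation of the $\nabla\sigma$ term is automatic from the geometric hypothesis $\sigma = \sigma(x_3)$, and the decisive algebraic identity $\nabla \cdot(\bvec{E}_\sigma \times \bvec{B}_0) = 1$ encodes precisely the curl equation of the MAT--MI system. The compact support of $h$ is used only to kill the boundary term in the integration by parts.
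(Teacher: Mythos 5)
Your proposal is correct and follows essentially the same route as the paper: both reduce $DF_\sigma(h)$ to $\nabla\cdot(h\,\bvec{E}_\sigma\times\bvec{B}_0)$ using the parallelism of $\nabla\sigma$ and $\bvec{B}_0$, then obtain $\langle DF_\sigma(h),h\rangle=\tfrac12\|h\|_{L^2}^2$ from the identity $\nabla\cdot(\bvec{E}_\sigma\times\bvec{B}_0)=\bvec{B}_1\cdot\bvec{B}_0=1$ and conclude by Cauchy--Schwarz. The only cosmetic difference is that you expand the divergence by the product rule before integrating by parts once, whereas the paper integrates by parts twice.
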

\begin{proof}
  Note that
  \[
  \nabla \cdot (\nabla \varphi_h \times \bvec{B}_0)=0,
   \]
   for any $H^2$ function $\varphi_h$ and that
   \[
   \nabla \sigma \times \bvec{B}_0 = (0,0, \frac{\partial \sigma}{\partial x_3}) \times (0,0,1) = 0.
   \]
   Hence,
  \[
  \nabla \cdot (\sigma \nabla \varphi_h  \times \bvec{B}_0)
  = \sigma \nabla \cdot (\nabla \varphi_h  \times \bvec{B}_0)
  + \nabla \sigma \cdot ( \nabla \varphi_h  \times \bvec{B}_0)=0.
  \]
    Therefore,
  \[
     DF_{\sigma}(h) = \nabla \cdot ((\sigma \nabla \varphi_h + h \bvec{E}_{\sigma}) \times \bvec{B}_0)
      = \nabla \cdot (h \bvec{E}_{\sigma} \times \bvec{B}_0).
  \]
  Multiplying the both sides by $h$ and integrating over $\Omega$, we obtain that
  \[
    \int_{\Omega} h DF_{\sigma}(h) \, \rmd x  = \int_{\Omega} h \nabla \cdot (h \bvec{E}_{\sigma} \times \bvec{B}_0) \, \rmd x.
  \]
  By using the integration by parts twice, we have
  \begin{eqnarray*}
    \int_{\Omega} h DF_{\sigma}(h) \, \rmd x & = \int_{\Omega} h \nabla \cdot (h \bvec{E}_{\sigma} \times \bvec{B}_0) \, \rmd x
    \\
    & = - \int_{\Omega}  (h \bvec{E}_{\sigma} \times \bvec{B}_0) \cdot \nabla h \, \rmd x
    \\
    & = -\frac{1}{2} \int_{\Omega}  (\bvec{E}_{\sigma} \times \bvec{B}_0) \cdot \nabla (h^2) \, \rmd x
    \\
    & = \frac{1}{2}\int_{\Omega} h^2 \nabla \cdot ( \bvec{E}_{\sigma} \times \bvec{B}_0) \, \rmd x
    \\
    & = \frac{1}{2}\|h\|_{L^2(\Omega)}^2.
  \end{eqnarray*}
  The last identity above follows by noting
  \[
  \nabla \cdot ( \bvec{E}_{\sigma} \times \bvec{B}_0) = \nabla \times \bvec{E}_{\sigma} \cdot \bvec{B}_0 = \bvec{B}_1 \cdot \bvec{B}_0 = 1.
  \]
  Then, by applying Cauchy-Schwarz inequality to $\int_{\Omega} h DF_{\sigma}(h) \, \rmd x$, we obtain \eref{lower-bd-Fre}.  \cvd
\end{proof}

The same technique can be used to provide a general stability estimate for the nonlinear inverse problem.
Note that, in the following theorem and corollary, no smallness constraint on the difference of conductivities is needed.

\begin{theorem}\label{thm:nonlinear-stab}
Suppose that $\sigma_1$ and $\sigma_2$ satisfy Assumption~\ref{asmp:sigma}. If $\sigma_1 -\sigma_2 \in W^{1,\infty}_0(\Omega)$ and
\begin{equation}\label{sig-con}
\nabla \sigma_1 \times \nabla \sigma_2 \cdot \bvec{B}_0 = 0 ,
\end{equation}
then the inequality
  \begin{equation}\label{eqn:nonlinear-stab}
    \|\sigma_1 - \sigma_2\|_{L^2(\Omega)} \leq 2 \|F(\sigma_1) - F(\sigma_2)\|_{L^2(\Omega)},
  \end{equation}
  holds true.
\end{theorem}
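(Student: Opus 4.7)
The plan is to mirror the ``test against $h$'' strategy used in the proof of Theorem~\ref{thm:linear-stab}, with the Fr\'echet derivative replaced by the finite increment $F(\sigma_1) - F(\sigma_2)$. Setting $h := \sigma_1 - \sigma_2 \in W^{1,\infty}_0(\Omega)$, I would aim for the identity
\[
\int_\Omega h\,\bigl(F(\sigma_1) - F(\sigma_2)\bigr)\,\dd x \;=\; \frac{1}{2}\,\|h\|_{L^2(\Omega)}^2,
\]
after which the Cauchy--Schwarz inequality immediately yields \eref{eqn:nonlinear-stab}. The role of the compatibility condition \eref{sig-con} is to kill a genuinely nonlinear remainder that has no counterpart in the linear proof.

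Because $\nabla\times\bvec{E}_{\sigma_1} = \nabla\times\bvec{E}_{\sigma_2} = \bvec{B}_1$, the difference $\bvec{E}_{\sigma_1} - \bvec{E}_{\sigma_2} = \nabla u$ is a gradient, exactly as in the proof of Theorem~\ref{thm:Fre-diff}. This produces the decomposition
\[
\sigma_1\bvec{E}_{\sigma_1} - \sigma_2\bvec{E}_{\sigma_2} \;=\; h\,\bvec{E}_{\sigma_2} + \sigma_1\,\nabla u,
\]
so $F(\sigma_1)-F(\sigma_2)$ splits into a ``main piece'' $\nabla\cdot(h\bvec{E}_{\sigma_2}\times\bvec{B}_0)$ and a ``remainder'' $\nabla\cdot(\sigma_1\nabla u\times\bvec{B}_0)$. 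Testing the main piece against $h$ and integrating by parts twice, exactly as in the proof of Theorem~\ref{thm:linear-stab} (first landing $\nabla h$, then writing $h\nabla h = \frac{1}{2}\nabla(h^2)$ and using $h|_{\partial\Omega}=0$), gives the desired $\frac{1}{2}\|h\|_{L^2}^2$ via the identity $\nabla\cdot(\bvec{E}_{\sigma_2}\times\bvec{B}_0) = \bvec{B}_1\cdot\bvec{B}_0 = 1$.

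The main obstacle is to show that the remainder $R := \int_\Omega h\,\nabla\cdot(\sigma_1\nabla u\times\bvec{B}_0)\,\dd x$ vanishes. Since $\bvec{B}_0$ is constant and $\nabla\times\nabla u = 0$, the integrand simplifies pointwise to $h\,\bvec{B}_0\cdot(\nabla\sigma_1\times\nabla u) = \nabla u\cdot \vec{w}$, where $\vec{w} := h\,(\bvec{B}_0\times\nabla\sigma_1)$. A third integration by parts, legitimate because $\vec{w}$ has vanishing normal trace on $\partial\Omega$, turns $R$ into $-\int_\Omega u\,\nabla\cdot\vec{w}\,\dd x$. Using once more the constancy of $\bvec{B}_0$ and $\nabla\times\nabla\sigma_1 = 0$, the divergence collapses to $\nabla h\cdot(\bvec{B}_0\times\nabla\sigma_1)$. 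Substituting $\nabla h = \nabla\sigma_1 - \nabla\sigma_2$ and applying the scalar triple product identity, the $\nabla\sigma_1$ contribution is zero by antisymmetry and the $\nabla\sigma_2$ contribution equals $-\bvec{B}_0\cdot(\nabla\sigma_1\times\nabla\sigma_2)$, which vanishes exactly by hypothesis \eref{sig-con}. Thus $R=0$ and the identity closes.

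The only mild technical point is that the $L^2$-regularity of $\bvec{E}_{\sigma_i}$ and $\nabla u$ supplied by Proposition~\ref{prop:E-bd} and Proposition~\ref{prop:reg}, together with the $W^{1,\infty}$-control on $h$ and the $\sigma_i$, must justify all three integrations by parts; this is routine from the weak formulations (and a density approximation of $h$ by smooth compactly supported functions if needed). With this in hand, the entire geometric content of the nonlinear estimate is concentrated in the single cancellation $\bvec{B}_0\cdot(\nabla\sigma_1\times\nabla\sigma_2) = 0$ supplied by \eref{sig-con}.
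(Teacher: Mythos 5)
Your proposal is correct and follows essentially the same route as the paper: test $F(\sigma_1)-F(\sigma_2)$ against $h=\sigma_1-\sigma_2$, extract $\tfrac12\|h\|_{L^2}^2$ from the main term via $\nabla\cdot(\bvec{E}\times\bvec{B}_0)=\bvec{B}_1\cdot\bvec{B}_0=1$, and kill the remainder by integration by parts and the scalar triple product identity using \eref{sig-con}. The only (immaterial, since \eref{sig-con} is symmetric) difference is that you split $\sigma_1\bvec{E}_1-\sigma_2\bvec{E}_2$ as $h\bvec{E}_2+\sigma_1\nabla u$ whereas the paper uses $h\bvec{E}_1+\sigma_2\nabla u$.
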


\begin{proof}
Assume that $E_1$ and $E_2$ solve \eref{eqn:MAT-MI} with $\sigma$ replaced by $\sigma_1$ and $\sigma_2$, respectively.
Let us multiply $F(\sigma_1) - F(\sigma_2)$ by $\sigma_1 - \sigma_2$ and integrate over $\Omega$ to obtain
\begin{equation}\label{iden-1}
\eqalign{
  & \int_\Omega (\sigma_1 - \sigma_2) (F(\sigma_1) - F(\sigma_2)) \, \rmd x
  \\
  = & \int_{\Omega}(\sigma_1 - \sigma_2) (\nabla \cdot(\sigma_1 \bvec{E}_1 - \sigma_2 \bvec{E}_2) \times \bvec{B}_0)  \, \rmd x
  \\
  = & \int_{\Omega}(\sigma_1 - \sigma_2) (\nabla \cdot((\sigma_1 -\sigma_2) \bvec{E}_1 \times \bvec{B}_0) + \nabla \cdot(\sigma_2 (\bvec{E}_1 - \bvec{E}_2)\times \bvec{B}_0))  \, \rmd x
  \\
  = & \frac{1}{2} \|\sigma_1 - \sigma_2\|_{L^2(\Omega)}^2
 +\int_{\Omega}(\sigma_1 - \sigma_2)  \nabla \cdot(\sigma_2 (\bvec{E}_1 - \bvec{E}_2)\times \bvec{B}_0)  \, \rmd x.
}
\end{equation}
In the above inequalities, the last step follows by the similar argument as in the proof of Theorem~\ref{thm:linear-stab}.

Next, we estimate
\[
\int_{\Omega}(\sigma_1 - \sigma_2)  \nabla \cdot(\sigma_2 (\bvec{E}_1 - \bvec{E}_2)\times \bvec{B}_0)  \, \rmd x.
\]
Recall that $\nabla \times (\bvec{E}_1 - \bvec{E}_2) = 0$. Hence, we can write $\bvec{E}_1 - \bvec{E}_2 = \nabla u$. Applying integration by parts twice, we obtain that
\begin{equation}\label{iden-2}
\eqalign{
& \int_{\Omega}(\sigma_1 - \sigma_2)  \nabla \cdot(\sigma_2 (\bvec{E}_1 - \bvec{E}_2)\times \bvec{B}_0)  \, \rmd x
\\
= & -\int_{\Omega} \sigma_2 \nabla(\sigma_1 - \sigma_2) \cdot (\nabla u \times \bvec{B}_0)  \, \rmd x
\\
= & \int_{\Omega} \sigma_2 (\nabla(\sigma_1 - \sigma_2)\times \bvec{B}_0) \cdot \nabla u \, \rmd x
\\
= & -\int_{\Omega} \nabla \cdot (\sigma_2 (\nabla(\sigma_1 - \sigma_2)\times \bvec{B}_0)) u \, \rmd x
\\
= & -\int_{\Omega} (\sigma_2 \nabla \cdot (\nabla(\sigma_1 - \sigma_2)\times \bvec{B}_0) + \nabla \sigma_2 \times \nabla(\sigma_1 - \sigma_2) \cdot\bvec{B}_0) u \, \rmd x
\\
= & 0.
}
\end{equation}
Here we use the equalities \eref{sig-con},
\[
\nabla \cdot (\nabla(\sigma_1 - \sigma_2)\times \bvec{B}_0) = 0 ,
\]
and
\[
\nabla \sigma_2 \times \nabla \sigma_2 = 0.
\]

Combining \eref{iden-1} and \eref{iden-2}, we discover
\[
\int_\Omega (\sigma_1 - \sigma_2) (F(\sigma_1) - F(\sigma_2)) \, \rmd x = \frac{1}{2} \|\sigma_1 - \sigma_2\|_{L^2(\Omega)}^2.
\]
The stability estimate \eref{eqn:nonlinear-stab} follows by applying the Cauchy-Schwarz inequality to the left-hand side of the above equality. \cvd
\end{proof}

In the following corollary, we list some simple cases, in which, the criteria \eref{sig-con} is easy to check.

\begin{corollary}\label{coro:stab-cont}
 Suppose that $\sigma_1$ and $\sigma_2$ satisfy Assumption~\ref{asmp:sigma}. If $\sigma_1 -\sigma_2 \in W^{1,\infty}_0(\Omega)$ and satisfy any one of the following three conditions:
 \begin{enumerate}
   \item $\sigma_1$ only depends on the third component of the spatial variable $x_3$;
   \item  There exists a real number $t$ such that $t \sigma_1 + (1-t) \sigma_2$ only depends on $x_3$;
   \item There exist a positive integer $N$ and real numbers $a_n$, $n=1,2\dots N$ such that $\sigma_2 + \sum_{n=1}^N a_n(\sigma_1 -\sigma_2)^n$ only depends on $x_3$;
 \end{enumerate}
then the stability estimate \eref{eqn:nonlinear-stab} holds true.
\end{corollary}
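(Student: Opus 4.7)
The plan is to reduce the corollary to Theorem~\ref{thm:nonlinear-stab} by verifying its only hypothesis \eref{sig-con}, the pointwise identity $\nabla \sigma_1 \times \nabla \sigma_2 \cdot \bvec{B}_0 = 0$, under each of the three listed structural conditions. The key observation is that this triple product equals the determinant $\det(\nabla \sigma_1,\nabla \sigma_2,\bvec{B}_0)$, which vanishes at a point precisely when those three vectors are linearly dependent there. Each of the three hypotheses will be converted into exactly such a pointwise linear dependence by differentiating the scalar identity it supplies.

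Case (i) is immediate: if $\sigma_1=\sigma_1(x_3)$, then $\nabla\sigma_1$ is a scalar multiple of $\bvec{B}_0=(0,0,1)$ at every point, so already two of the three vectors are parallel and the triple product vanishes.

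For case (ii), differentiating the identity $t\sigma_1+(1-t)\sigma_2=f(x_3)$ gives
\[
   t\,\nabla\sigma_1 + (1-t)\,\nabla\sigma_2 \;=\; f'(x_3)\,\bvec{B}_0
\]
at every point of $\Omega$, which is a nontrivial linear relation among $\{\nabla\sigma_1,\nabla\sigma_2,\bvec{B}_0\}$; the degenerate parameter values $t\in\{0,1\}$ reduce directly to case (i) applied to $\sigma_2$ or $\sigma_1$, so \eref{sig-con} holds in every subcase. For case (iii), differentiating $\sigma_2+\sum_{n=1}^N a_n(\sigma_1-\sigma_2)^n=g(x_3)$ and setting $\beta(x):=\sum_{n=1}^N n\,a_n(\sigma_1-\sigma_2)^{n-1}$ yields
\[
  \beta\,\nabla\sigma_1 \;+\; (1-\beta)\,\nabla\sigma_2 \;=\; g'(x_3)\,\bvec{B}_0 .
\]
Since $\beta$ and $1-\beta$ cannot vanish simultaneously, this is again a nontrivial pointwise linear relation forcing the three vectors into a common plane, and \eref{sig-con} follows.

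There is no substantive obstacle here: the corollary is essentially a catalogue of settings in which the algebraic condition \eref{sig-con} of Theorem~\ref{thm:nonlinear-stab} can be read off by inspection. The only care needed is the brief case analysis in (ii) and (iii) to ensure the derived linear relation is genuinely nontrivial at every point, after which \eref{eqn:nonlinear-stab} is immediate from Theorem~\ref{thm:nonlinear-stab}.
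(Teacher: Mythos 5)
Your proof is correct and follows essentially the same route as the paper: both reduce the corollary to Theorem~\ref{thm:nonlinear-stab} by verifying the pointwise condition \eref{sig-con} under the structural hypotheses, using that the gradient of the given combination is parallel to $\bvec{B}_0$. The only cosmetic difference is that the paper treats (i) and (ii) as special cases of (iii) and kills the triple product by a direct telescoping computation ending in $\nabla(\sigma_1-\sigma_2)\times\nabla(\sigma_1-\sigma_2)=\bvec{0}$, whereas you express the same vanishing as pointwise linear dependence of $\nabla\sigma_1$, $\nabla\sigma_2$, $\bvec{B}_0$ obtained by differentiating the hypothesis.
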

\begin{proof}
  We can readily see that $(i)$ and $(ii)$ are simple cases of $(iii)$. It suffices to show that \eref{sig-con} is satisfied and apply Theorem~\ref{thm:nonlinear-stab}.

  From $(iii)$, we know that
  \begin{equation}\label{help1}
  \nabla \left(\sigma_2 + \sum_{n=1}^N a_n(\sigma_1 -\sigma_2)^n \right) \times \bvec{B}_0 = \bvec{0}.
  \end{equation}
  In light of \eref{help1} and the facts that
  \[
  \nabla \sigma_2 \times  \nabla \sigma_2 = \bvec{0},
  \]
  we have the following equalities,
  \begin{eqnarray*}
      & \nabla \sigma_1 \times \nabla \sigma_2 \cdot \bvec{B}_0
    \\
    = & \nabla (\sigma_1 -\sigma_2) \times \nabla \sigma_2 \cdot \bvec{B}_0
    \\
    = & \nabla (\sigma_1 -\sigma_2) \times \nabla \left(\sigma_2 - \left(\sigma_2 + \sum_{n=1}^N a_n(\sigma_1 -\sigma_2)^n \right)\right) \cdot \bvec{B}_0
   \\
    = & \nabla (\sigma_1 -\sigma_2) \times \left(\sum_{n=1}^N a_n n(\sigma_1 -\sigma_2)^{n-1} \nabla (\sigma_1 -\sigma_2) \right)\cdot \bvec{B}_0
    \\
    = &  \left(\sum_{n=1}^N a_n n(\sigma_1 -\sigma_2)^{n-1} \nabla (\sigma_1 -\sigma_2) \times \nabla (\sigma_1 -\sigma_2) \right)\cdot \bvec{B}_0
    \\
    = & 0.
  \end{eqnarray*}
  The proof is completed by applying Theorem~\ref{thm:nonlinear-stab}.  \cvd
\end{proof}

Roughly speaking, in Theorem~\ref{thm:nonlinear-stab}, we prove that, if the structure of two conductivities satisfies the condition \eref{sig-con}, the inverse problem bears a Lipschitz stability estimate. We propose next to remove this structure condition.
In Theorem~\ref{thm:nonlinear-stab2}, we show that, if one conductivity varies less dramatically, the Lipschitz type stability estimates also holds true.

\begin{proof}[Proof of Theorem~\ref{thm:nonlinear-stab2}]
  The proof differs from the one of Theorem~\ref{thm:nonlinear-stab} in the treatment of the last term in \eref{iden-1},
  \[
  I \triangleq \int_{\Omega}(\sigma_1 - \sigma_2)  \nabla \cdot(\sigma_2 (\bvec{E}_1 - \bvec{E}_2)\times \bvec{B}_0)  \, \rmd x.
  \]
  We continue from \eref{iden-1}. First, we estimate the electric field difference. Note that $\bvec{E}_1 - \bvec{E}_2$ is curl-free and we set
  \[
  \nabla u = \bvec{E}_1 - \bvec{E}_2.
  \]
  Then, $u$ satisfies the equation
  \begin{equation}
  \left\{
  \begin{array}{lll}
   \nabla \cdot (\sigma_1 \nabla u) & =  -\nabla \cdot ((\sigma_1 - \sigma_2)\bvec{E}_2), &\qquad \mbox{ in } \Omega,
  \\
     \nabla u \cdot \nu & =  0 , &\qquad \mbox{ on } \partial\Omega.
  \end{array}
  \right.
\end{equation}
  Applying Proposition~\ref{prop:reg} to $u$, we obtain that
  \[
  \|\nabla u\|_{L^2} \leq \lambda^{-1} \|(\sigma_1 -\sigma_2) \bvec{E}_2\|_{L^2}.
  \]
  From the standard $L^p$ estimate of elliptic equations \cite[Chapter 9]{Gilbarg2001} and the Sobolev Embedding Theorem, we know that $\bvec{E}_2$ is bounded and
  \[
  \|\bvec{E}_2\|_{L^\infty} < C,
  \]
  where $C$ only depends on $r_0$, $\lambda$, $\Lambda$ and $\Omega$. Thus, we conclude that
  \[
  \|\bvec{E}_1 - \bvec{E}_2\|_{L^2} \leq C \|(\sigma_1 -\sigma_2)\|_{L^2}.
  \]
  Now, with the choice of $K$ such that $KC \leq 1/4$,
  we estimate $|I|$ as follows:
  \begin{equation}\label{I-est2}
\eqalign{
  |I|  & = \left| \int_{\Omega}(\sigma_1 - \sigma_2)  \nabla \cdot(\sigma_2 (\bvec{E}_1 - \bvec{E}_2)\times \bvec{B}_0)  \, \rmd x \right|
  \\
   & =\left|\int_{\Omega}(\sigma_1 - \sigma_2)  \nabla \sigma_2 \cdot( (\bvec{E}_1 - \bvec{E}_2)\times \bvec{B}_0)  \, \rmd x \right|
  \\
  & \leq  \|\nabla \sigma_2\|_{L^\infty} \, \|\sigma_1 -\sigma_2\|_{L^2} \, \|\bvec{E}_1 - \bvec{E}_2\|_{L^2}
  \\
  & \leq  C\|\nabla \sigma_2\|_{L^\infty} \, \|\sigma_1 -\sigma_2\|_{L^2}^2
  \\
  & \leq \frac{1}{4} \|\sigma_1 -\sigma_2\|_{L^2}^2.
}
\end{equation}
Substituting \eref{I-est2} into \eref{iden-1}, we discover that
\[
\int_\Omega (\sigma_1 - \sigma_2) (F(\sigma_1) - F(\sigma_2)) \, \rmd x \geq \frac{1}{4} \|\sigma_1 - \sigma_2\|_{L^2(\Omega)}^2.
\]
The desired estimate \eref{eqn:nonlinear-stab2} follows by applying the Cauchy-Schwarz inequality to the left-hand side. \cvd
\end{proof}

\section{An iterative reconstruction scheme}
One possible approach to solving the inverse problem is to formulate
it as a least-squares problem.  One can then apply a gradient-based
method to solve the least-squares problem.  Such a method will require
knowledge of the Fr\'echet derivative of the forward map which
we studied in Section~\ref{sec:model}.  Convergence analysis of
this type of reconstruction approach is available in
\cite{Hoop2012,Hoop2015}.  Results in these references, together with
our analysis of of $DF$ in Sections~\ref{sec:model} and
\ref{sec:stab} can be used to provide a convergence
analysis for the iterative reconstruction of MAT-MI using steepest
descent method. The main challenge of the least-squares approach
lies in the difficulty to accurately evaluate $DF$ and its adjoint
where numerical differentiations are involved.  We temporarily
abandon the least-squares approach in favor of one
that is based on a fixed point method.  This approach is described next.

\subsection{Formulation}
In view of the structure of this inverse problem, we propose a novel
iterative scheme, in which, the forward map and its derivative are not
required. The desired conductivity is updated by solving
a stationary advection-diffusion equation. Let
$\sigma^\dagger$ denote the unknown conductivity to be reconstructed,
$\bvec{E}^\dagger$ be the corresponding electric field and $g$ be the
internal data obtained in the first step of MAT-MI. The internal data
is related to the conductivity and the field through
\[
g = \nabla \cdot (\sigma^\dagger \bvec{E}^\dagger \times \bvec{B}_0).
\]
The algorithm proceeds as follows:
\begin{enumerate}[(S1)]
\setcounter{enumi}{-1}
  \item Select an initial conductivity $\sigma_0$ and set $k=0$;
  \item Calculate the associated electric field $\bvec{E}_k$ by solving the
        boundary value problem
      \begin{equation}
      \left\{
      \begin{array}{lll}
        \nabla \times \bvec{E}_k & = \bvec{B}_1, &\qquad \mbox{ in }
        \Omega,
          \\
        \nabla \cdot (\sigma_k \bvec{E}_k) & =0, &\qquad \mbox{ in } \Omega,
          \\
          \sigma_k \bvec{E}_k \cdot \nu &= 0 , &\qquad \mbox{ on }
            \partial\Omega .
       \end{array}
       \right.
      \end{equation}
  \item Calculate the updated conductivity by solving the stationary advection-diffusion equation:
      \begin{equation}\label{eqn:update_sigma}
      \left\{
      \begin{array}{rll}
        \nabla \cdot (\sigma_{k+1} \bvec{E}_{k}\times \bvec{B}_0) & = g , &\qquad \mbox{ in } \Omega,
          \\
        \sigma_{k+1} & = \sigma_0 , & \qquad \mbox{ on } \partial\Omega.
        \end{array}
       \right.
      \end{equation}
  \item Set $k=k+1$ and go to (S1).
 \end{enumerate}
Convergence test can be based on $\|\sigma_{k}-\sigma_{k-1}\|$ or based on data misfit
$\| g - \nabla \cdot (\sigma_{k+1} \bvec{E}_{k+1} \times \bvec{B}_0) \|$.

\subsection{Convergence analysis}
The main advantage of this scheme is two-fold:
First, the update of the conductivity is calculated directly using
the the measured data and the simulated electric field. Hence, fewer
numerical differentiations are involved when compared to the gradient-based
least-squares minimization.
Second, the convergence analysis can be carried out
using an idea similar to the one in the proof of
 Theorem~\ref{thm:nonlinear-stab2}. A global convergence result and a
 linear convergence rate are established the following theorem.
  \begin{theorem}\label{thm:num-conv}
    Suppose that the true conductivity $\sigma^\dagger$ satisfies Assumption~\ref{asmp:sigma} and
    \begin{equation}\label{sig-slow-alg}
        \|\nabla \sigma^\dagger \|_{L^\infty} < 2K,
    \end{equation}
     where the constant $K$ is the same as in Theorem~\ref{thm:nonlinear-stab2}, which only depends on $r_0$, $\lambda$, $\Lambda$ and $\Omega$. Then, for any initial $\sigma_0$ satisfying Assumption~\ref{asmp:sigma} and coinciding with $\sigma^\dagger$ over the boundary $\partial \Omega$, the above algorithm generates a sequence $\{ \sigma_k\} , \quad k=0,1,\dots$, which is convergent to $\sigma^\dagger$ and satisfies
     \begin{equation}\label{conv-rate}
       \|\sigma_{k} - \sigma^\dagger\|_{L^2} \leq c^k \|\sigma_0 - \sigma^\dagger\|_{L^2}, \quad k= 0,1,\dots ,
     \end{equation}
     where $c<1$ depends on $\|\nabla \sigma^\dagger \|_{L^\infty}$ and $\Omega$.
  \end{theorem}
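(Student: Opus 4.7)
The plan is to imitate the proof of Theorem~\ref{thm:nonlinear-stab2} one step at a time: set $e_k := \sigma_k - \sigma^\dagger$ and show a strict contraction $\|e_{k+1}\|_{L^2} \leq c\|e_k\|_{L^2}$ with $c<1$. Because $\sigma_{k+1}=\sigma_0$ on $\partial\Omega$ for every $k$ and $\sigma_0=\sigma^\dagger$ there, each error $e_{k+1}$ lies in $W^{1,\infty}_0(\Omega)$, which is exactly the boundary condition used in Theorem~\ref{thm:nonlinear-stab2}. Subtracting $g = \nabla\cdot(\sigma^\dagger\bvec{E}^\dagger\times\bvec{B}_0)$ from the update \eref{eqn:update_sigma} and splitting $\sigma_{k+1}\bvec{E}_k - \sigma^\dagger\bvec{E}^\dagger = e_{k+1}\bvec{E}_k + \sigma^\dagger(\bvec{E}_k-\bvec{E}^\dagger)$ produces the key relation
\[
\nabla\cdot(e_{k+1}\bvec{E}_k\times\bvec{B}_0) = \nabla\cdot(\sigma^\dagger(\bvec{E}^\dagger-\bvec{E}_k)\times\bvec{B}_0).
\]

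Next I multiply by $e_{k+1}$ and integrate over $\Omega$. On the left, the same two integrations by parts used in Theorem~\ref{thm:linear-stab} (together with $\nabla\cdot(\bvec{E}_k\times\bvec{B}_0)=\nabla\times\bvec{E}_k\cdot\bvec{B}_0 = \bvec{B}_1\cdot\bvec{B}_0=1$) yield exactly $\tfrac{1}{2}\|e_{k+1}\|_{L^2}^2$, the boundary terms vanishing because $e_{k+1}|_{\partial\Omega}=0$. On the right, since $\nabla\times(\bvec{E}^\dagger-\bvec{E}_k)=0$ I write $\bvec{E}^\dagger-\bvec{E}_k = \nabla u$ and expand $\nabla\cdot(\sigma^\dagger\nabla u\times\bvec{B}_0) = \nabla\sigma^\dagger\cdot(\nabla u\times\bvec{B}_0)$ (the $\sigma^\dagger$ term drops because $\bvec{B}_0$ is constant and $\nabla\times\nabla u = 0$). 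Cauchy--Schwarz then gives
\[
\tfrac{1}{2}\|e_{k+1}\|_{L^2}^2 \leq \|\nabla\sigma^\dagger\|_{L^\infty}\,\|e_{k+1}\|_{L^2}\,\|\nabla u\|_{L^2}.
\]

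To close the estimate I bound $\|\nabla u\|_{L^2}$ in terms of $\|e_k\|_{L^2}$. Combining $\nabla\cdot(\sigma^\dagger\bvec{E}^\dagger)=0$ and $\nabla\cdot(\sigma_k\bvec{E}_k)=0$ with the matching boundary fluxes shows $u$ satisfies the Neumann problem $\nabla\cdot(\sigma^\dagger\nabla u) = -\nabla\cdot(e_k\bvec{E}_k)$, $\sigma^\dagger\nabla u\cdot\nu=0$ on $\partial\Omega$ (using $\sigma_k=\sigma^\dagger$ on $\partial\Omega$). Proposition~\ref{prop:reg} gives $\|\nabla u\|_{L^2}\leq \lambda^{-1}\|\bvec{E}_k\|_{L^\infty}\|e_k\|_{L^2}$, and the $L^p$/Sobolev embedding argument already invoked in the proof of Theorem~\ref{thm:nonlinear-stab2} furnishes the uniform bound $\|\bvec{E}_k\|_{L^\infty}\leq C(r_0,\lambda,\Lambda,\Omega)$. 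Plugging in and canceling one factor of $\|e_{k+1}\|_{L^2}$ yields $\|e_{k+1}\|_{L^2}\leq 2\lambda^{-1}C\,\|\nabla\sigma^\dagger\|_{L^\infty}\,\|e_k\|_{L^2}$. With $K$ calibrated as in Theorem~\ref{thm:nonlinear-stab2} so that $\lambda^{-1}C\cdot K\leq 1/4$, the strict hypothesis $\|\nabla\sigma^\dagger\|_{L^\infty}<2K$ forces $c:=2\lambda^{-1}C\|\nabla\sigma^\dagger\|_{L^\infty}<1$, and the geometric decay \eref{conv-rate} follows by induction.

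The main obstacle I foresee is the \emph{uniform} validity of the $L^\infty$ bound $\|\bvec{E}_k\|_{L^\infty}\leq C$ along the entire iteration: this requires every $\sigma_k$ to remain in the Assumption~\ref{asmp:sigma} class with the same $\lambda,\Lambda$, and to preserve whatever geometric separation of the support of $\sigma_k-\sigma^\dagger$ from $\partial\Omega$ is implicit in the Thm~\ref{thm:nonlinear-stab2} constant $C$. A subsidiary issue is the well-posedness of the update \eref{eqn:update_sigma}: one must verify that the integral curves of the advection field $\bvec{E}_k\times\bvec{B}_0$ exit $\Omega$, so that the Dirichlet data $\sigma_0$ on $\partial\Omega$ determines $\sigma_{k+1}$ stably in $W^{1,\infty}$. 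Both points are most cleanly handled in a preparatory lemma that propagates the required bounds from $\sigma_k$ to $\sigma_{k+1}$; once that is in place the contraction argument above proceeds verbatim.
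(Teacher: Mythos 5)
Your argument is essentially identical to the paper's proof: the same subtraction of $\nabla\cdot(\sigma^\dagger\bvec{E}_k\times\bvec{B}_0)$ from the update equation, the same splitting into $e_{k+1}\bvec{E}_k + \sigma^\dagger(\bvec{E}_k-\bvec{E}^\dagger)$, the same double integration by parts producing $\tfrac12\|e_{k+1}\|_{L^2}^2$, the same Neumann-problem estimate $\|\bvec{E}^\dagger-\bvec{E}_k\|_{L^2}\le C\|e_k\|_{L^2}$, and the same contraction constant $c$ calibrated against $K$. The caveats you raise at the end (uniform $L^\infty$ control of $\bvec{E}_k$ along the iteration and well-posedness of the advection update) are genuine but are equally left unaddressed in the paper; note that the first can be sidestepped by writing the Neumann problem for $u$ with source $-\nabla\cdot(e_k\bvec{E}^\dagger)$, so that only the fixed field $\bvec{E}^\dagger$ needs an $L^\infty$ bound.
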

\begin{proof}
  We start by subtracting $\nabla\cdot(\sigma^\dagger \bvec{E}_k \times \bvec{B}_0)$ from both sides of \eref{eqn:update_sigma} to obtain
  \[
   \nabla \cdot((\sigma_{k+1} - \sigma^\dagger) \bvec{E}_k \times\bvec{B}_0) = \nabla \cdot(\sigma^\dagger (\bvec{E}^\dagger - \bvec{E}_k) \times\bvec{B}_0).
  \]
  Multiplying the both sides by $\sigma_{k+1} - \sigma^\dagger$ and integrating over $\Omega$, we arrive at
  \begin{eqnarray*}
  & \frac{1}{2} \|\sigma_{k+1} - \sigma^\dagger\|^2_{L^2}
  \\
  = &  \int_{\Omega} \, (\sigma_{k+1} - \sigma^\dagger) \nabla \cdot((\sigma_{k+1} - \sigma^\dagger) \bvec{E}_k \times\bvec{B}_0) \dd x
   \\
  = & \int_{\Omega} \, (\sigma_{k+1} - \sigma^\dagger) \nabla \cdot(\sigma^\dagger (\bvec{E}^\dagger - \bvec{E}_k) \times\bvec{B}_0) \dd x
  \\
  = & \int_{\Omega} \, (\sigma_{k+1} - \sigma^\dagger) \nabla \sigma^\dagger \cdot( (\bvec{E}^\dagger - \bvec{E}_k) \times\bvec{B}_0) \dd x.
  \end{eqnarray*}
  In the above identities, the first identity follows from a similar argument to the one used in the proof of Theorem~\ref{thm:linear-stab} and the last identity follows by noting that $(\bvec{E}^\dagger - \bvec{E}_k) \times\bvec{B}_0$ is divergence-free. Next, we estimate the electric field difference. As in the proof of Theorem~\ref{thm:nonlinear-stab2}, we conclude that
  \[
    \|\bvec{E}^\dagger - \bvec{E}_k\|_{L^2} \leq C \|\sigma_k - \sigma^\dagger\|_{L^2}.
  \]
  By the Cauchy-Schwarz inequality and \eref{sig-slow-alg}, we have that
  \[
  \|\sigma_{k+1} - \sigma^\dagger\|_{L^2} \leq c \|\sigma_k - \sigma^\dagger\|_{L^2}
  \]
  and \eref{conv-rate} follows from an induction argument on $k$. \cvd

\end{proof}

\begin{remark}
  Let us point out that indeed the convergence analysis of the
  proposed algorithm carries through when the inverse problem have a
  Lipschitz type stability estimate. In fact,
  Theorem~\ref{thm:nonlinear-stab2} still holds true with the
  condition \eref{sig-slow} replaced by
  \eref{sig-slow-alg}. Correspondingly, the stability constant will
  depend on $\|\nabla \sigma^\dagger \|_{L^\infty}$ and blow up as
  $\|\nabla \sigma^\dagger \|_{L^\infty}$ approaches $2K$.
\end{remark}

\subsection{Numerical experiments}
Now we present some numerical experiments to verify the convergence
theory presented in the previous subsection. For each experiment, the
true conductivity is assumed to be Lipschitz continuous and equal to
$0.2$ near the boundary and we use constant $0.2$ as the initial model
unless otherwise specified. To simplify the computation, we transform
the 3D problem into a 2D problem by assuming the conductivity is
invariant along the $x_3$ direction.  The setup is as follows. The
domain we take is the square $\Omega = (0, 1) \times (0,1)$. We employ
a uniform triangulation with a mesh size of $1/64$. Both the Neumann
problem and the stationary advection-diffusion equation are solved
using a first-order finite element method. The algorithm is
implemented using FEniCS, a finite element software package
\cite{Logg2012}, and using Python as the user interface. All the
numerical computations are performed on a dual-core laptop computer.

\subsubsection*{Example 1.}
We first consider a simple example. The true conductivity is shown in
\Fref{fig:1peak-true} and the error between the true and reconstructed
model is shown in \Fref{fig:1peak-error}. The relative $L^2$-error, 
$\|\sigma_k-\sigma^\dagger\|/\|\sigma^\dagger\|$,
drops to $2.88\times10^{-7}$ after $16$ iterations. As shown in
\Fref{fig:1peak-decay}, a linear convergence rate is observed.
\begin{figure}
  \centering
  \subfloat[]{
      \includegraphics[width=.45\textwidth,clip=true,trim=5cm 1cm 0.2cm 2cm]{./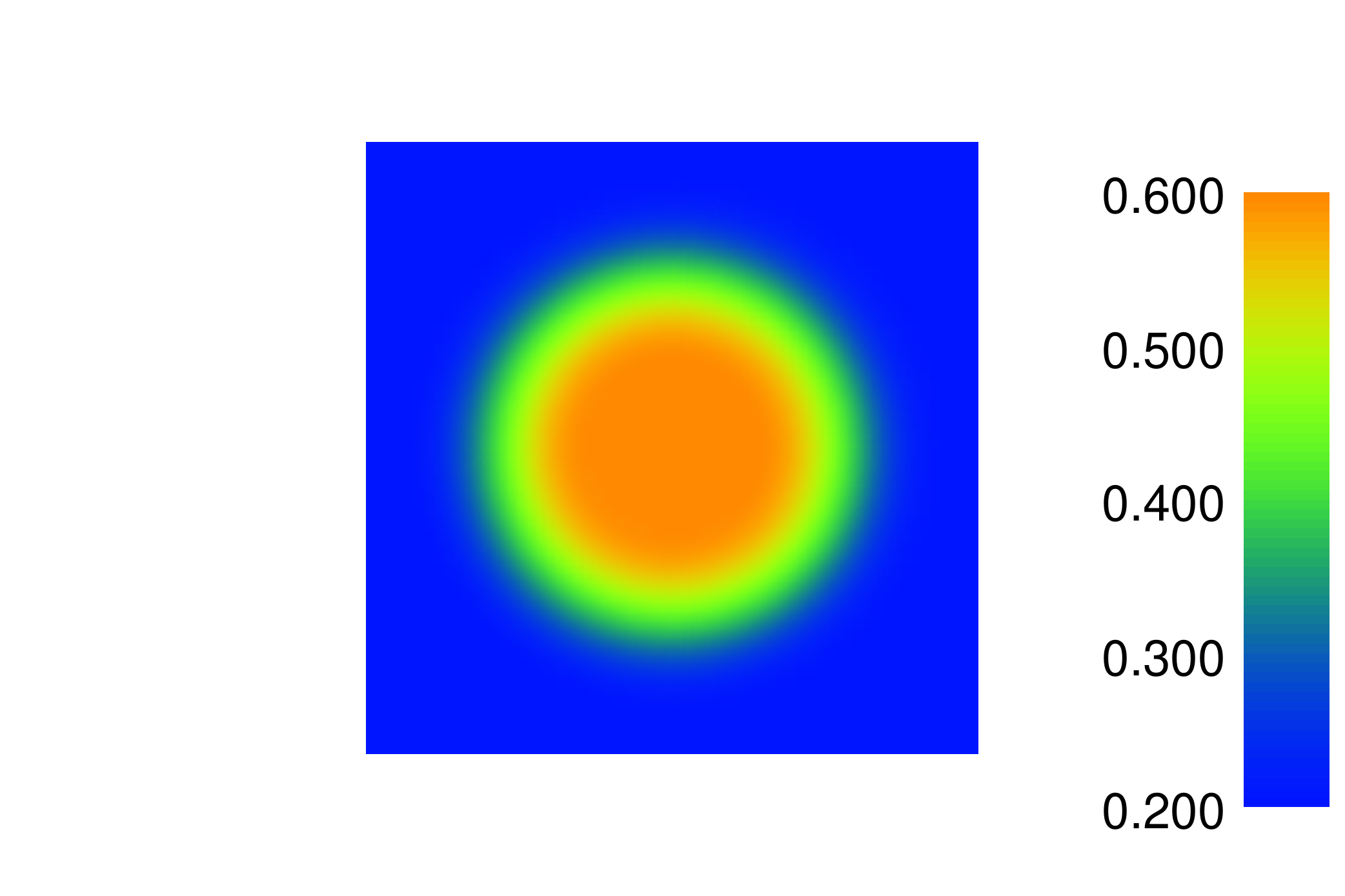}
      \label{fig:1peak-true}}
  \hspace{0.6mm}
  \subfloat[]{
      \includegraphics[width=.45\textwidth,clip=true,trim=5cm 1cm 0.2cm 2cm]{./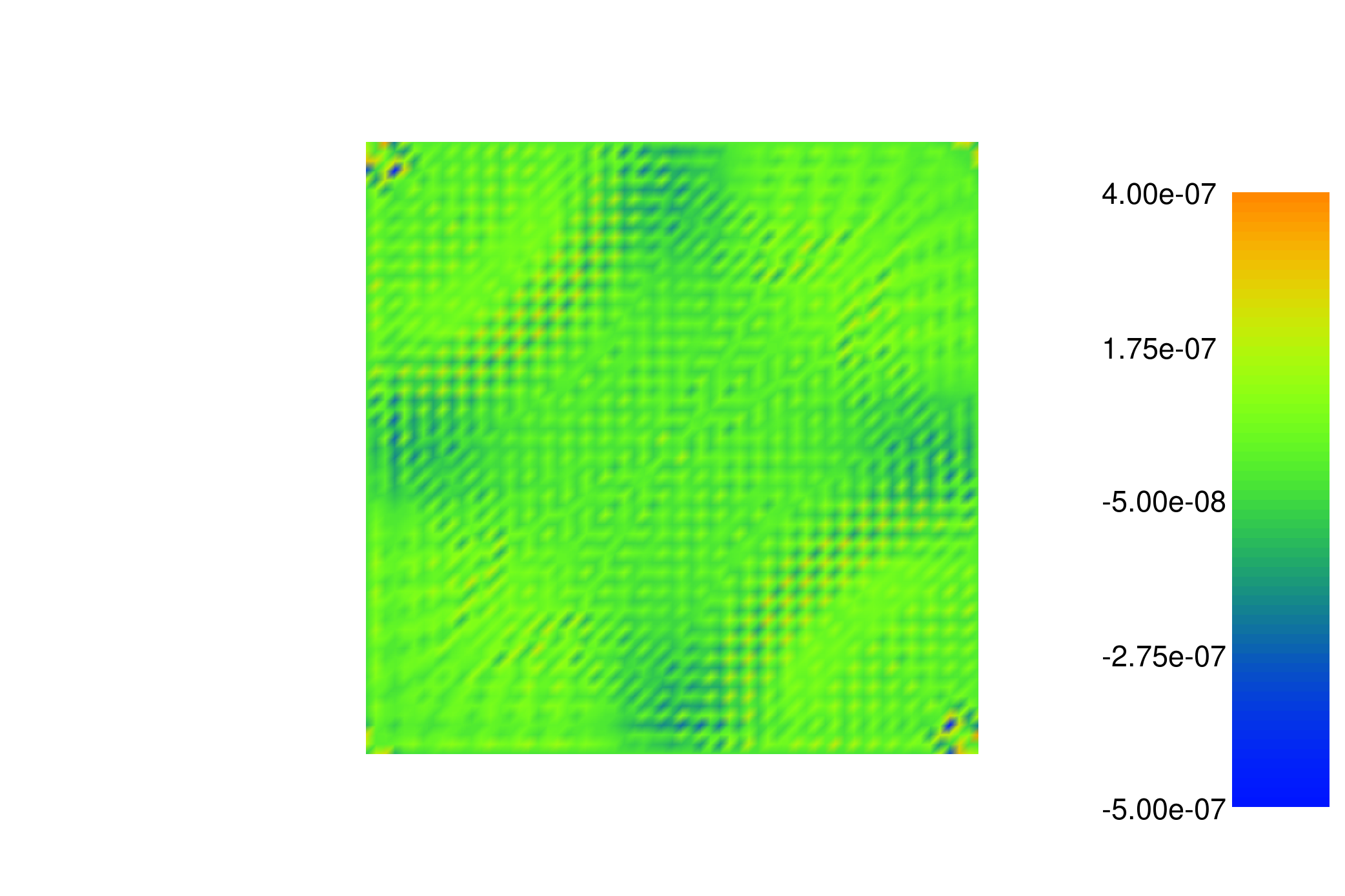}
      \label{fig:1peak-error}  }
  \\
  \subfloat[]{
  \begin{overpic}[width=.75\textwidth,clip=true, trim=0cm -0.6cm 0cm 0cm]{./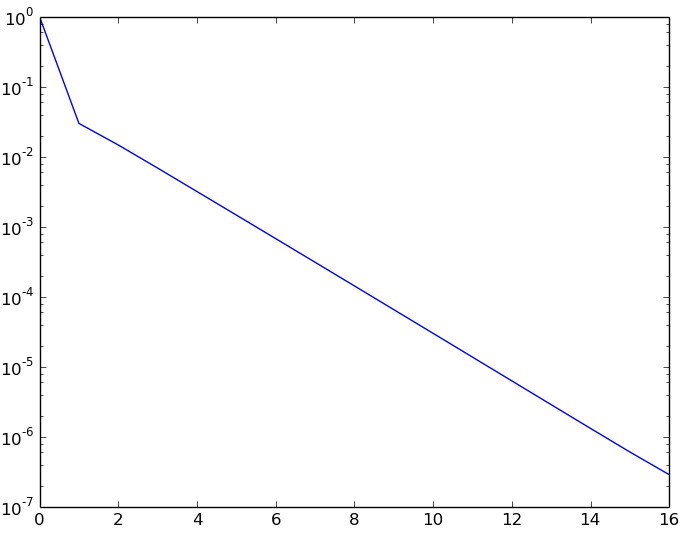}
      \put(-5.5,40){\rotatebox[]{90}{ Relative error ($L^2$) }}
      \put(36,0.5){ Iteration number}
      \end{overpic}
      \label{fig:1peak-decay}  }
  \caption{(a) true conductivity model, (b) absolute error between the inverted and true conductivity model, (c) decay of the relative error in $L^2$ norm (logarithmic scale).}
  \label{fig:ex1}
\end{figure}

\subsubsection*{Example 2.}
We then attempt to recover a more complicated conductivity model, as
shown in \Fref{fig:M-true}. In \Fref{fig:M-error}, we show the
absolute error of the recovered model after $45$ iterations. The
relative $L^2$-error drops to $2.57 \times 10^{-7}$ and a linear
convergence rate is still observed. In this example, the gradient of
the conductivity is greater than the one in the previous
example. According to Theorem~\ref{thm:num-conv}, this will lead to a
greater prefactor $c$ in the convergence rate. The comparison of
\Fref{fig:1peak-decay} and \Fref{fig:M-decay} demonstrates this point.

\begin{figure}
  \centering
  \subfloat[]{
      \includegraphics[width=.45\textwidth,clip=true,trim=5cm 1cm 0.2cm 2cm]{./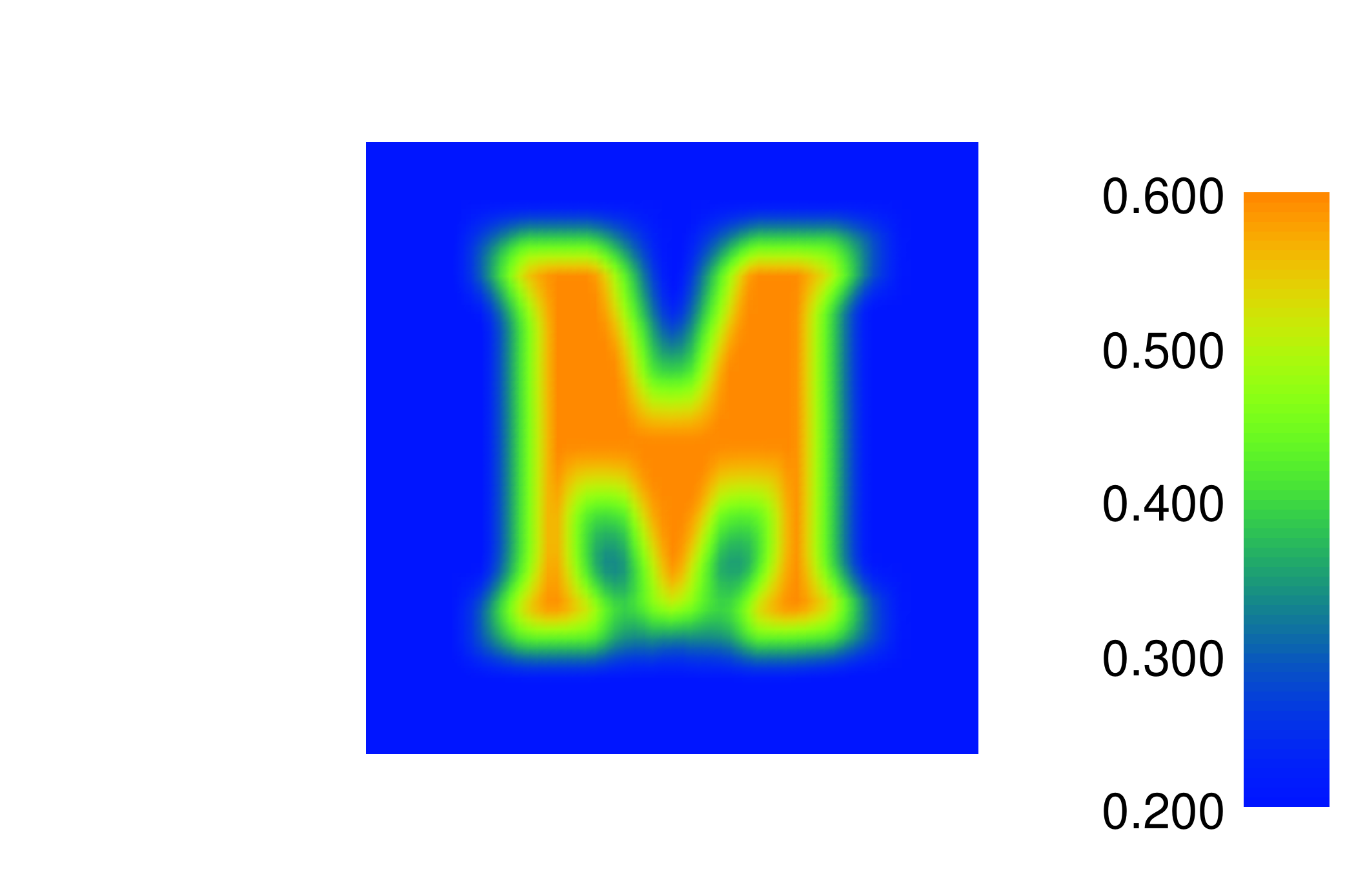}
      \label{fig:M-true}}
  \hspace{0.6mm}
  \subfloat[]{
      \includegraphics[width=.45\textwidth,clip=true,trim=5cm 1cm 0.2cm 2cm]{./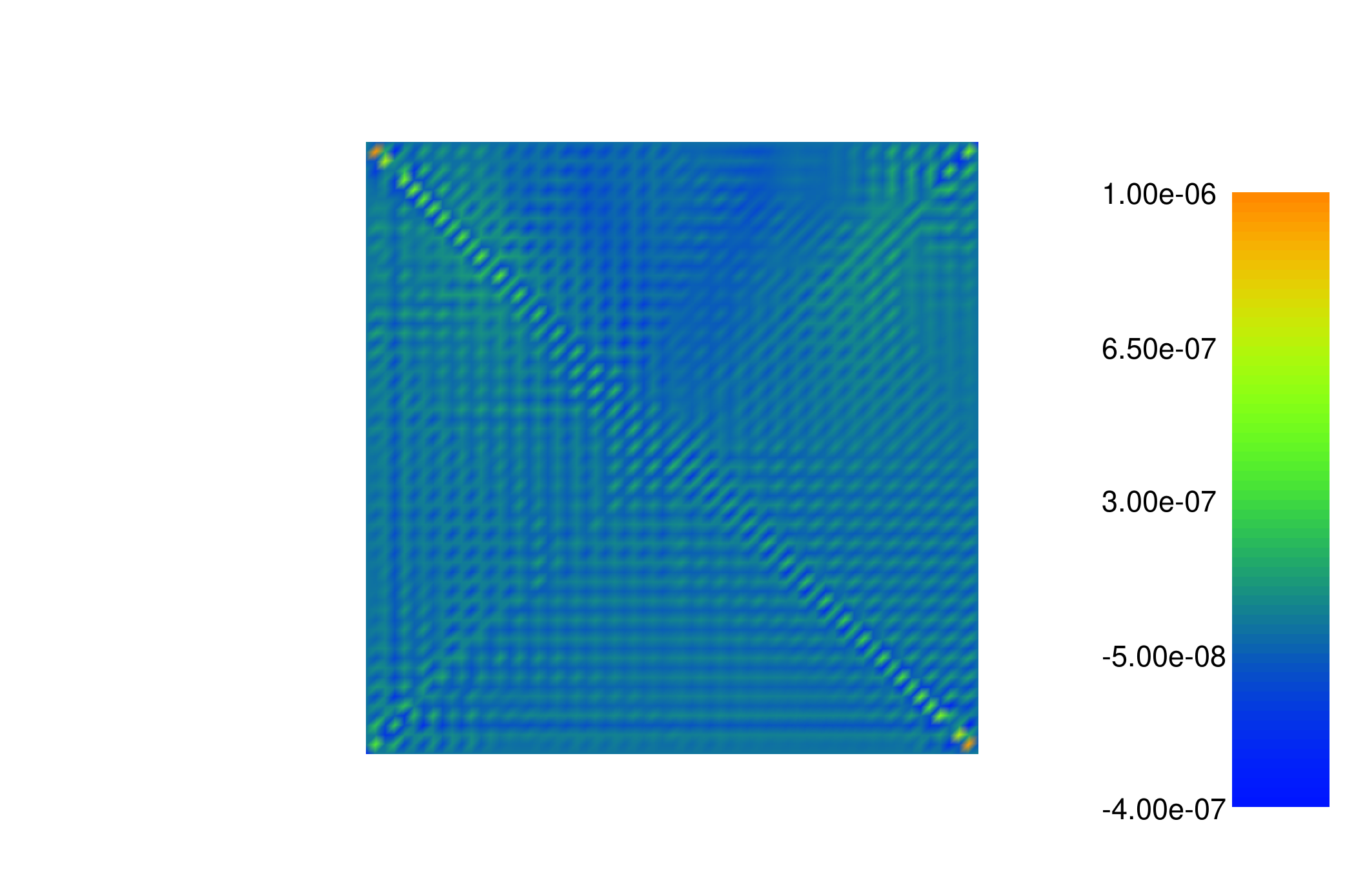}
      \label{fig:M-error}  }
  \\
  \subfloat[]{
      \begin{overpic}[width=.75\textwidth,clip=true, trim=0cm -0.6cm 0cm 0cm]{./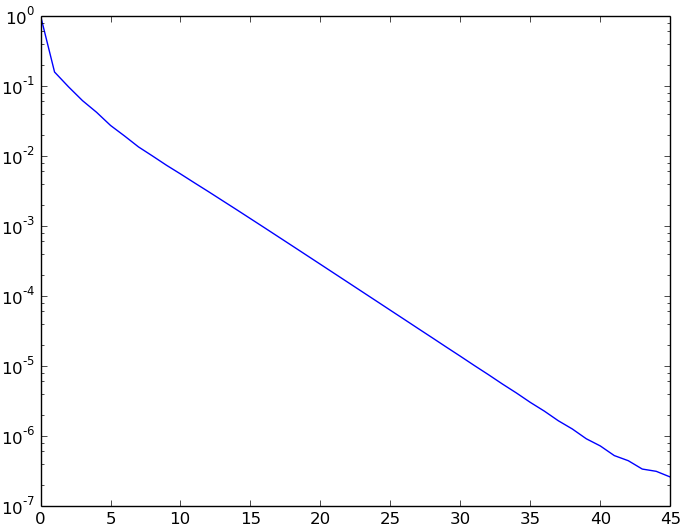}
      \put(-5.5,40){\rotatebox[]{90}{ Relative error ($L^2$) }}
      \put(36,0.5){ Iteration number}
      \end{overpic}
      \label{fig:M-decay}  }
  \caption{(a) true conductivity model, (b) absolute error between the inverted and true conductivity model, (c) decay of the relative error in $L^2$ norm (logarithmic scale).}
  \label{fig:ex2}
\end{figure}

\subsubsection*{Example 3.}
To further investigate and demonstrate the convergence results in Theorem~\ref{thm:num-conv}, we perform the third test, which is the ``reverse'' Example~2. We switch the role of the true model and the initial model in Example~2. That is, we try to recover the constant conductivity with an initial model as shown in \Fref{fig:M-true}. The algorithm converges after $1$ iteration with the absolute $L^2$-error drops below $5\times 10^{-8}$. This implies that the prefactor $c$ approaches zero as the true conductivity goes to a constant function. Actually, this can be proved by noticing that, when $g$ is constant, the unique solution to \eref{eqn:update_sigma} is the same constant for any admissible $\sigma_0$.

\section{Discussion}
\label{sec:dis}
We investigated the second step in MAT-MI where the problem is
to reconstruct the conductivity distribution from internal data
obtained in the first step.  A global Lipschitz type stability estimate
is established when the conductivity is $W^{1,\infty}$.  We devise
a novel iterative method for solving the inverse problem that involves,
at each iteration, the solution of a well-posed boundary value problem
followed by the solution of an advection-diffusion problem.  The
iterative method is shown to be convergent.  Results from numerical
experiments demonstrate the effectiveness of the approach.

It would be interesting to extend the computational method proposed
to three dimensions and to invert real measured data.  An important
direction for this research is to consider the case of anisotropic
conductivity.  In \cite{Brinker2008}, the authors examine the effect of electrical
anisotropy in MAT-MI. A homogeneous tissue is considered. They find
that, when imaging nerve or muscle, electrical anisotropy has a
significant effect on the acoustic signal and must be accounted for in
order to obtain accurate images.

\section*{Acknowledgements}
The authors would like to thank Professor Bin He, Leo Mariappan, and Zhu Wang for their helpful discussions.
This research was supported in part by the Institute for Mathematics and its Applications with funds provided by the National Science Foundation under NSF DMS-0931945. Fadil Santosa's research is supported in
part by NSF DMS-1211884.


\noindent

\def\cprime{$'$} \newcommand{\SortNoop}[1]{}

\section*{References}
\bibliographystyle{siam}
\bibliography{conv}

\end{document}